\documentclass[letterpaper, 11pt]{amsart}
 
\usepackage{fullpage}
\usepackage{graphicx}
\usepackage{amsmath, amssymb, amsfonts, amsthm}
\usepackage{url}

\usepackage[ruled,vlined]{algorithm2e}

\usepackage{color}
\definecolor{darkblue}{rgb}{0,0,0.4}
\usepackage[plainpages=false,colorlinks=true,citecolor=blue,filecolor=black,linkcolor=red,urlcolor=darkblue]{hyperref}

\newtheorem{thm}{Theorem}[section]

\newtheorem{lem}[thm]{Lemma}
\theoremstyle{remark}
\newtheorem{rem}[thm]{Remark}

\numberwithin{equation}{section}

\usepackage{mathtools}
\usepackage{newlfont}
\usepackage{graphicx}
\usepackage{float}
\usepackage{geometry}

\usepackage{caption}
\usepackage{subcaption}
\usepackage{comment}


\newcommand{\eps}{\varepsilon}


\newcommand{\ed}{\end {document}}

\newcommand{\diag}{\mathrm{diag}}

\graphicspath{{./figs/}} 

\usepackage[backend=bibtex,giveninits=true,maxcitenames=2,maxbibnames=10,style=numeric,natbib=true,url=false,sorting=nyt,doi=true]{biblatex}
\addbibresource{ref.bib}
\renewbibmacro{in:}{\ifentrytype{article}{}{\printtext{\bibstring{in}\intitlepunct}}}

\begin{document}

\title{Unconditional energy dissipation of Strang splitting for the matrix-valued Allen--Cahn equation}

\author{Chaoyu Quan}
\address{School of Science and Engineering, The Chinese University of Hong Kong, Shenzhen, Guangdong 518172, China}
\email{quanchaoyu@cuhk.edu.cn}

\author{Tao Tang}
\address{School of Mathematics and Statistics, Guangzhou Nanfang College, Guangzhou, Guangdong Province, China}
\email{ttang@nfu.edu.cn}

\author{Dong Wang}
\address{School of Science and Engineering, The Chinese University of Hong Kong, Shenzhen, Guangdong 518172, China \& Shenzhen International Center for Industrial and Applied Mathematics, Shenzhen Research Institute of Big Data, Guangdong 518172, China}
\email{wangdong@cuhk.edu.cn}

\subjclass[2010]{65M12, 65M15} 

\keywords{Matrix-valued Allen--Cahn equation, energy dissipation law, Strang splitting, stability and convergence}

\date{\today}

\begin{abstract} 
The energy dissipation property of the Strang splitting method was first demonstrated for the matrix-valued Allen–Cahn (MAC) equation under restrictive time-step constraints [J. Comput. Phys. 454, 110985, 2022]. In this work, we eliminate this limitation through a refined stability analysis framework, rigorously proving that the Strang splitting method preserves the energy dissipation law unconditionally for arbitrary time steps. The refined proof hinges on a precise estimation of the double-well potential term in the modified energy functional. Leveraging this unconditional energy dissipation property, we rigorously establish that the Strang splitting method achieves global-in-time $H^1$-stability, preserves determinant boundedness, and maintains second-order temporal convergence for the matrix-valued Allen–Cahn equation. To validate these theoretical findings, we conduct numerical experiments confirming the method’s energy stability and determinant bound preservation for the MAC equation.
\end{abstract}

\maketitle

\section{Introduction}
The theory of singular perturbation for scalar-valued phase transition problems has attracted considerable attention in both analytical and computational studies, due to its wide-ranging applications across various scientific fields and its inherent mathematical challenges. A classic example is phase transition in the set $\{+1, -1\}$, which typically leads to the Allen--Cahn reaction-diffusion equations in the $L^2$ gradient flow, and in the set $S^1$, resulting in the complex Ginzburg-Landau equation. More general target sets composed of two disconnected manifolds have also been explored; see, for instance, \cite{rubinstein1989reaction,lin2012phase,liu2022phase} and the references therein.

The study is broadly categorized into two main parts: the gradient flow system and the minimization of the energy functional. Gradient flow systems are commonly employed to model the dynamics of interfaces. For instance, the scalar Allen--Cahn equation captures the behavior of two-phase fluid problems, approximately modeling the mean curvature flow of the interface. Stationary solutions to such problems typically emerge from the task of finding harmonic maps from a domain to a specified target set. These solutions find significant applications in various fields, including inverse problems in image analysis for diffusion tensor MRI or fiber tractography, where estimating matrix- or orientation-valued functions is of interest \cite{osting2020diffusion2,bacak2016second}. Additionally, they play a pivotal role in directional field synthesis problems in geometry processing and computer graphics, which are often formulated as the problem of identifying matrix-valued fields within a specific class \cite{golovaty2021variational,viertel2019approach}.

In this paper, we focus on the numerical methods for the case where phase transition takes in the set of orthogonal matrix group $O(m) = O^+(m) \cup O^-(m)$ where $O^{\pm}(m)$ denotes the set of orthogonal matrices with determinate $\pm 1$. This leads to  the matrix-valued Allen--Cahn (MAC) equation introduced in \cite{osting2020diffusion}. The equation governing the evolution of a matrix field \( U(t,x): [0,\infty) \times \Omega \to \mathbb{R}^{m \times m} \) is given by
\begin{equation} \label{eq:matrixAC}
\partial_t U = \varepsilon^2 \Delta U  + U - UU^{\top}U,
\end{equation}
where \(\Omega = [-\pi, \pi]^d\) is a \(2\pi\)-periodic torus and \(\varepsilon > 0\) characterizes the interface thickness. It describes the gradient flow of a Ginzburg--Landau energy that penalizes deviations from orthogonality. Its dynamics reveal rich interfacial phenomena: at a fast time scale, interfaces evolve by mean curvature motion, while at a slower time scale, matrix-valued surface diffusion dominates \cite{wang2019interface}. These properties make the MAC equation a potential tool in materials science and geometric analysis.

Numerical methods for the MAC equation shall preserve two key properties: the \textit{maximum bound principle} (MBP), ensuring \(\|U(t,x)\|_F \leq \sqrt{m}\), and the \textit{energy dissipation law}. While the scalar Allen-Cahn equation has inspired schemes like exponential time differencing (ETD) \cite{du2021maximum} and integrating factor Runge--Kutta (IFRK) methods \cite{sun2023maximum}, their matrix-valued extensions often impose restrictive symmetry assumptions on initial data to ensure MBP. For example, ETD schemes in \cite{du2021maximum} and IFRK methods in \cite{sun2023maximum} require \(U(0,x)\) to be symmetric—a condition not generally satisfied. Recently, such symmetry assumption is removed for the ETD schemes \cite{liu2024maximum}. Operator splitting methods, such as the Strang splitting, avoid this symmetry assumption but initially demanded time-step constraints for energy dissipation \cite{li2022stability2}. Concurrently, theoretical advances, including the sharp-interface limit analysis in \cite{fei2023matrix}, have deepened understanding of the MAC equation’s geometric properties.


In this work, we improve the energy dissipation analysis of the Strang splitting method for \eqref{eq:matrixAC} in \cite{li2022stability2}, by proving that the energy dissipation law holds \textit{unconditionally}—without time-step restrictions. 
Additionally, we establish the scheme’s uniform $H^1$-stability, uniform bound of determinant, and second-order convergence in time. The highlights of this work include:
\begin{itemize}
    \item[(1)] {Unconditional energy dissipation:} Based on new matrix analysis techniques, we eliminate the time-step dependencies in the energy stability proof (Theorem \ref{thm:energy}), which improves the energy stability result under small time-step constraint in \cite{li2022stability2};
    \item[(2)] {Uniform stability and convergence analysis:} A priori \(H^1\)-stability estimates (Theorem \ref{thm:stb}) and determinant bounds (Theorem \ref{thm:det}) enable a full second-order error analysis (Theorem \ref{thm:conv}), extending results from the scalar case;
    \item[(3)] {Connection to diffusion-generated methods:} As \(\varepsilon \to 0\), the scheme reduces to a diffusion-generated method for matrix fields, bridging phase-field and threshold dynamics approaches.
\end{itemize}
The unconditional energy stability allows larger time steps in practical computations, while the convergence guarantees reliability. 

The rest of this paper is organized as follows:
Section \ref{sec:strang_operator_splitting} introduces the Strang operator splitting scheme for the matrix-valued Allen--Cahn equation, including the unconditional energy dissipation property.
Section \ref{sec:stability_convergence} provides a detailed analysis of the global-in-time $H^1$-stability and uniform bound of the determinant for the numerical solution.
In addition, we present the second-order convergence in time for the Strang splitting method based on the stability results.
Section \ref{sec:connections} discusses the connections to diffusion-generated methods, including the optimization perspective and the case when $\varepsilon\rightarrow 0$. In Section \ref{sec:num}, several numerical tests are conducted to illustrate the energy stability and the determinant bound preservation of Strang splitting method for MAC equation, as well as its comparison with thresholding method.
Finally, Section \ref{sec:conclusion} concludes the paper and outlines future work.

\section{The Strang operator splitting scheme for \eqref{eq:matrixAC}}\label{sec:strang_operator_splitting}




The Strang operator splitting scheme was introduced to solve the scalar or matrix-valued Allen--Cahn equation in \cite{li2022stability,li2022stability2}. It employs a symmetric operator splitting approach, where the equation is divided into parts that are alternately solved over staggered time intervals. This symmetric structure cancels out leading-order errors, achieving second-order accuracy in time.
We consider firstly the nonlinear part of MAC equation, i.e. the following 
ODE for $U=U(t):\, [0,\infty) \to \mathbb R^{m\times m}$:
\begin{align} \label{eq:nlODE}
\begin{cases}
\partial_t U = U-UU^\top U, \\
U(0) = U^0 \in \mathbb R^{m\times m}.
\end{cases}
\end{align}
The unique smooth
 solution $U(t)$ to \eqref{eq:nlODE} is given by (see \cite{li2022stability2} for details)
\begin{align} \label{3t1}
\mathcal S_{\mathcal N}(t) U^0 := U(t) = \left((e^{2t}-1) U^0(U^0)^\top  +  I\right)^{-\frac12} e^t U^0,\qquad t>0.
\end{align}
Given $U:\, \Omega \to \mathbb R^{m\times m}$ and $t>0$, we define 
the linear propagator 
\begin{align}
\Bigl( \mathcal S_{\mathcal L} (t) U \Bigr)_{ij} (x) =  \Bigl( e^{t\varepsilon^2\Delta} U_{ij} \Bigr)(x),
\qquad i, j=1,\cdots m.
\end{align}
In other words, the operator $\mathcal S_{\mathcal L}(t) = e^{t\varepsilon^2\Delta}$ is applied to the matrix $U$ entry-wise. 
The Strang splitting method for \eqref{eq:matrixAC} is 
\begin{align}\label{eq:Strang}
U^{n+1}  = \mathcal S_{\mathcal L}\left( \tau/2\right) \mathcal S_{\mathcal N}\left( \tau \right)\mathcal S_{\mathcal L}\left( \tau/2\right) U^n.
\end{align}

In \cite{li2022stability2}, the key ingredient of establishing the energy dissipation law of Strang splitting method for MAC is a trace inequality (see Lemma 3.1--3.3 in \cite{li2022stability2}) with a restriction on time step, for example, $\tau <0.0631$ for $m=3$. 
Next, we will provide a novel framework of analysis so that such restriction on time step can be removed.  

\subsection{Unconditional energy dissipation}
We will use the following identity
\begin{align}
\langle A, \, B \rangle_F =  \mathrm{Tr}(AB^\top  ) = \sum_{i,j=1}^m A_{ij} B_{ij},
\qquad\forall\, A, B \in \mathbb R^{m\times m},
\end{align}
where $\langle \cdot,\, \cdot\rangle_F$ denotes the usual Frobenius inner product and $\mathrm{Tr}$ is the trace operator.

\begin{lem}\label{lem:ineqTr}
    For any $U\in\mathbb R^{m\times m}$ and any $\alpha>0$, 
    \begin{equation}
        \mathrm{Tr}\left[\left( I+ \alpha \mathrm{diag}^2(U)\right)^{\frac12}\right] \leq \mathrm{Tr}\left[\left( I+ \alpha U U^{\top}\right)^{\frac12}\right]. 
    \end{equation}
    In other words, the above inequality is
    \begin{equation}
        \| [I~\alpha^{\frac12}\diag(U)]\|_* \leq \| [I ~\alpha^{\frac12} U]\|_*,
    \end{equation}
    where $\|\cdot\|_*$ denotes the nuclear norm. 
\end{lem}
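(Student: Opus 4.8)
The plan is to work entirely with the nuclear-norm formulation given in the statement, since it exposes the underlying structure most cleanly. First I would record the elementary identity that for the $m\times 2m$ block matrix $M=[I~\alpha^{\frac12}U]$ one has $MM^\top = I+\alpha UU^\top$, so that $\|M\|_* = \mathrm{Tr}[(I+\alpha UU^\top)^{\frac12}]$; likewise $\|[I~\alpha^{\frac12}\diag(U)]\|_* = \mathrm{Tr}[(I+\alpha\diag^2(U))^{\frac12}]$, using that $\diag(U)$ is symmetric so that $\diag(U)\diag(U)^\top=\diag^2(U)$. This reduces the claim to the monotonicity statement that replacing $U$ by its diagonal part does not increase the nuclear norm of the corresponding augmented matrix.

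The key idea is that the diagonal pinching $U\mapsto\diag(U)$ is an \emph{average of orthogonal conjugations}. For each sign vector $\epsilon\in\{\pm1\}^m$ let $D_\epsilon=\diag(\epsilon_1,\dots,\epsilon_m)$; a direct entrywise computation, $(D_\epsilon U D_\epsilon)_{ij}=\epsilon_i\epsilon_j U_{ij}$, shows that $\frac{1}{2^m}\sum_\epsilon D_\epsilon U D_\epsilon = \diag(U)$, since the off-diagonal phases $\epsilon_i\epsilon_j$ ($i\neq j$) average to zero while the diagonal ones equal $1$. I would then lift this averaging to the block matrix by conjugating $M$ on the left by $D_\epsilon$ and on the right by the $2m\times 2m$ block-diagonal orthogonal matrix $\diag(D_\epsilon,D_\epsilon)$. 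Because $D_\epsilon^2=I$, the identity block is left untouched, $D_\epsilon I D_\epsilon = I$, while the second block becomes $\alpha^{\frac12}D_\epsilon U D_\epsilon$; hence $\frac{1}{2^m}\sum_\epsilon D_\epsilon\, M\,\diag(D_\epsilon,D_\epsilon) = [I~\alpha^{\frac12}\diag(U)]$.

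Finally I would invoke the two defining properties of the nuclear norm: invariance under left/right multiplication by orthogonal matrices, so that each conjugate of $M$ has the same nuclear norm as $M$, and convexity (the triangle inequality). Applying both to the averaged identity yields
\[
\|[I~\alpha^{\frac12}\diag(U)]\|_* \;\le\; \frac{1}{2^m}\sum_\epsilon \|D_\epsilon\, M\,\diag(D_\epsilon,D_\epsilon)\|_* \;=\; \|M\|_*,
\]
which is exactly the desired inequality.

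I expect the main obstacle to be the bookkeeping in the second step: one must choose the conjugation of the augmented matrix so that the fixed identity block is preserved while only the $U$ block is pinched, which forces the asymmetric pairing of $D_\epsilon$ on the left against the block-diagonal $\diag(D_\epsilon,D_\epsilon)$ on the right. Once this structural point is arranged correctly, the conclusion is immediate from standard properties of the nuclear norm, with no spectral decomposition or majorization machinery needed. A route through Ky Fan or majorization comparisons of singular values would also work, but it seems heavier than this averaging argument.
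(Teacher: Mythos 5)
Your proof is correct, and it takes a genuinely different route from the paper's. The paper argues directly on the trace functional: it expands $u_{ii}=\sum_k \delta_k p_{ik}q_{ik}$ via the SVD $U=P\,\diag(\delta_1,\dots,\delta_m)\,Q^\top$, notes that $h(\lambda)=(1+\alpha\lambda^2)^{1/2}$ is increasing and convex, and bounds $\sum_i h(u_{ii})$ by $\sum_k h(\delta_k)$ through the chain $h(|u_{ii}|)\le h\bigl(\sum_k \delta_k|p_{ik}q_{ik}|\bigr)\le h\bigl(\sum_k \delta_k\tfrac{p_{ik}^2+q_{ik}^2}{2}\bigr)\le \sum_k \tfrac{p_{ik}^2+q_{ik}^2}{2}\,h(\delta_k)$, exploiting that the weights $\tfrac{p_{ik}^2+q_{ik}^2}{2}$ are doubly stochastic and invoking Jensen. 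You instead realize the diagonal pinching as the average $\tfrac{1}{2^m}\sum_{\epsilon} D_\epsilon U D_\epsilon=\diag(U)$ over sign matrices, lift it to the augmented matrix via the asymmetric conjugation $D_\epsilon M\,\diag(D_\epsilon,D_\epsilon)$ --- which is exactly the right bookkeeping, since $D_\epsilon^2=I$ fixes the identity block while pinching the $U$ block --- and conclude from orthogonal invariance plus the triangle inequality for the nuclear norm. Every step checks: $MM^\top=I+\alpha UU^\top$ gives the identification of $\|M\|_*$ with the trace of the square root, the off-diagonal phases $\epsilon_i\epsilon_j$ average to zero, and each conjugate shares the singular values of $M$. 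Your argument is in fact stronger and more conceptual: it never uses the specific form of $h$, only that the nuclear norm is a unitarily invariant norm, so the identical averaging yields $\|[I~\alpha^{1/2}\diag(U)]\|\le\|[I~\alpha^{1/2}U]\|$ for \emph{every} unitarily invariant norm (equivalently, weak majorization of the singular values), of which the lemma is the nuclear-norm instance. What the paper's route buys in exchange is a lower level of machinery --- only scalar convexity and an explicit doubly stochastic mechanism, with no appeal to the pinching-as-average-of-conjugations device.
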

\begin{proof}
Let $U = \left(u_{ij}\right)_{1\leq i,j\leq m}$.
Assume that $U$ has a singular value decomposition
\begin{equation}
    U = P \mathrm{diag}(\delta_1,\delta_2,\ldots,\delta_m) Q^\top
\end{equation}
with $\delta_i\geq 0$ are the singular values of $U$, $P = \left(p_{ij}\right)\in \mathbb R^{m\times m}$ and $Q = \left(q_{ij}\right)\in \mathbb R^{m\times m}$ are orthogonal matrix.
We have
\begin{equation}
    u_{ij} = \sum_{k =1}^m \delta_{k} p_{ik}q_{jk},\quad \forall 1\leq i,j\leq m.
\end{equation}

We have
\begin{equation}\label{eq:TrUU}
     \mathrm{Tr}\left[\left( I+ \alpha U U^\top\right)^{\frac12}\right] =  \sum_{k=1}^m \left(1+\alpha \delta_{k}^2\right)^{\frac12} =  \sum_{k =1}^m h\left(  \delta_{k} \right)
\end{equation}
and 
\begin{equation}\label{eq:TrDelta}
\begin{aligned}
     \mathrm{Tr}\left[\left( I+ \alpha \mathrm{diag}^2(U)\right)^{\frac12}\right] = 
     & \sum_{i=1}^m \left(1+\alpha u_{ii}^2\right)^{\frac12} = \sum_{i=1}^m \left(1+\alpha \left(\sum_{k =1}^m \delta_{k} p_{ik}q_{ik}\right)^2\right)^{\frac12} \\
     =& \sum_{i=1}^m h \left(\sum_{k =1}^m \delta_{k} p_{ik}q_{ik}\right),
     \end{aligned}
\end{equation}
where 
\begin{equation}
    h(\lambda) = \left(1+\alpha \lambda^2\right)^{\frac12}.
\end{equation}
We compute straightly: $\forall \lambda>0,$ 
\begin{equation}
\begin{aligned}
    h'(\lambda) = \alpha \lambda (1+\alpha \lambda^2)^{-\frac12}>0\quad\mbox{and}\quad
    h''(\lambda) = \alpha (1+\alpha \lambda^2)^{-\frac32} > 0,
    \end{aligned}
\end{equation}
meaning that $h(\lambda)$ is a convex function. 
From \eqref{eq:TrUU}, \eqref{eq:TrDelta}, the facts
\begin{equation}
   \sum_{k =1}^m \frac{p_{ik}^2+q_{ik}^2}{2}=1,\quad \sum_{i =1}^m \frac{p_{ik}^2+q_{ik}^2}{2}=1,
\end{equation}
and the Jensen's inequality for convex function, we have
\begin{equation}
    \begin{aligned}
        &\mathrm{Tr}\left[\left( I+ \alpha \mathrm{diag}^2(U)\right)^{\frac12}\right] \\
         = &  \sum_{i=1}^m h\left(\sum_{k =1}^m \delta_{k} p_{ik}q_{ik}\right)
        =  \sum_{i=1}^m h\left(\left|\sum_{k =1}^m \delta_{k} p_{ik}q_{ik}\right|\right)\\
         \leq & \sum_{i=1}^m  h\left( \sum_{k =1}^m \delta_{k} \left|p_{ik}q_{ik}\right|\right)
         \leq \sum_{i=1}^m  h\left( \sum_{k =1}^m \delta_{k} \frac{p_{ik}^2+q_{ik}^2}{2}\right)\\
         \leq & \sum_{i=1}^m \sum_{k =1}^m \frac{p_{ik}^2+q_{ik}^2}{2}  h\left(  \delta_{k} \right)
        =  \sum_{k =1}^m \left(\sum_{i=1}^m \frac{p_{ik}^2+q_{ik}^2}{2}  \right) h\left(  \delta_{k} \right) = \sum_{k =1}^m h\left(  \delta_{k} \right)\\
        = & \mathrm{Tr}\left[\left( I+ \alpha U U^\top\right)^{\frac12}\right].
    \end{aligned}
\end{equation}
Finally, note that for any $X\in \mathbb R^{m\times m}$,
$$
\mathrm{Tr}\left[\left( I+ XX^\top\right)^{\frac12}\right] = \sum_{i =1}^m\sigma_i([I~ X]) = \|X\|_*.
$$
We then deduce the equivalent nuclear-norm inequality in the lemma.

\end{proof}

\begin{lem}\label{lem2}
For any $U\in\mathbb R^{m\times m}$, it holds that
    \begin{equation}
         -\frac14 m \leq \left\langle G(U),I\right\rangle_F
         \leq  \left\langle G(\mathrm{diag}(U)),I\right\rangle_F + \frac1{2\tau}\|U-\mathrm{diag}(U)\|_F^2,  
\end{equation}
where 
\begin{equation}\label{eq:funG}
    G(U)  \coloneqq \frac1{2\tau} U U^\top  - \frac{e^\tau}{\tau(e^{2\tau}-1)} \left(\left(  I+(e^{2\tau}-1)U U^\top \right)^{\frac12}-  I\right).
\end{equation}
\end{lem}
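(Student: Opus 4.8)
The plan is to diagonalize the whole expression through the singular value decomposition of $U$, thereby reducing the matrix statement to a one-variable problem. Writing $\langle G(U), I\rangle_F = \mathrm{Tr}(G(U))$ and abbreviating $\beta = e^{2\tau}-1 > 0$, I would use $\mathrm{Tr}(UU^\top) = \sum_k \delta_k^2$ together with $\mathrm{Tr}[(I+\beta UU^\top)^{1/2}] = \sum_k (1+\beta\delta_k^2)^{1/2}$, where $\delta_1,\ldots,\delta_m$ are the singular values of $U$. This rewrites the middle quantity as
\begin{equation}
\langle G(U), I\rangle_F = \sum_{k=1}^m g(\delta_k), \qquad g(\delta) := \frac{\delta^2}{2\tau} - \frac{e^\tau}{\tau(e^{2\tau}-1)}\left((1+\beta\delta^2)^{1/2}-1\right).
\end{equation}
The two inequalities then decouple cleanly: the lower bound becomes a scalar minimization of $g$, while the upper bound hands off directly to Lemma \ref{lem:ineqTr}.

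For the upper bound, I would start from the Frobenius-orthogonal splitting $\|U\|_F^2 = \|\mathrm{diag}(U)\|_F^2 + \|U-\mathrm{diag}(U)\|_F^2$, so that the quadratic parts of $\langle G(U),I\rangle_F$ and $\langle G(\mathrm{diag}(U)),I\rangle_F$ differ by exactly $\frac{1}{2\tau}\|U-\mathrm{diag}(U)\|_F^2$. The remaining contributions are $-\frac{e^\tau}{\tau(e^{2\tau}-1)}$ times $\mathrm{Tr}[(I+\beta UU^\top)^{1/2}]$ and $\mathrm{Tr}[(I+\beta\,\mathrm{diag}^2(U))^{1/2}]$ respectively; since the prefactor is negative, the claimed inequality is equivalent to $\mathrm{Tr}[(I+\beta\,\mathrm{diag}^2(U))^{1/2}] \leq \mathrm{Tr}[(I+\beta UU^\top)^{1/2}]$, which is precisely Lemma \ref{lem:ineqTr} with $\alpha = \beta = e^{2\tau}-1 > 0$. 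So this half is immediate.

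For the lower bound I would minimize $g$ over $\delta \geq 0$. Setting $s = \delta^2$, the derivative in $s$ vanishes exactly when $(1+\beta s)^{1/2} = e^\tau$, i.e. at $s = 1$ (equivalently $\delta = 1$), and a sign check of $g'$ confirms this critical point is the global minimum. Using $\beta = (e^\tau-1)(e^\tau+1)$ collapses the minimal value to
\begin{equation}
g(1) = \frac{1}{2\tau} - \frac{e^\tau}{\tau(e^\tau+1)} = -\frac{1}{2\tau}\cdot\frac{e^\tau-1}{e^\tau+1} = -\frac{1}{2\tau}\tanh\frac{\tau}{2}.
\end{equation}
Summing over the $m$ singular values yields $\langle G(U),I\rangle_F \geq m\,g(1)$, so it suffices to check $g(1) \geq -\tfrac14$, i.e. $\tanh(\tau/2) \leq \tau/2$, which holds for every $\tau > 0$.

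The only genuine obstacle is the lower bound, and within it the scalar estimate $g(1)\geq -\tfrac14$; the whole argument hinges on recognizing that the minimizer sits exactly at the singular value $\delta = 1$ and that the resulting bound reduces to the elementary inequality $\tanh(\tau/2)\leq \tau/2$. The upper bound, by contrast, is essentially a bookkeeping rearrangement that delegates the real content to Lemma \ref{lem:ineqTr}.
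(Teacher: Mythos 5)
Your proposal is correct and follows essentially the same route as the paper: an SVD reduction of $\langle G(U),I\rangle_F$ to a sum of scalar terms for the lower bound, and for the upper bound the observation that the quadratic parts differ by exactly $\frac{1}{2\tau}\|U-\mathrm{diag}(U)\|_F^2$ while the square-root traces are handled by Lemma \ref{lem:ineqTr} with $\alpha=e^{2\tau}-1$. The one place you go beyond the paper is the lower bound, where the paper merely asserts the scalar inequality $\frac{1}{2\tau}s-\frac{e^\tau}{\tau(e^{2\tau}-1)}\bigl((1+(e^{2\tau}-1)s)^{\frac12}-1\bigr)\geq-\frac14$ without proof, whereas you verify it by locating the global minimizer at $\delta=1$ and reducing to $\tanh(\tau/2)\leq\tau/2$, which in fact yields the slightly sharper bound $\langle G(U),I\rangle_F\geq-\frac{m}{2\tau}\tanh(\tau/2)$.
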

\begin{proof}
Assume that $U$ has a singular value decomposition: $U = U_0\Sigma V_0^\top$ where $U_0$ and $V_0$ are orthogonal, and $\Sigma =\diag\{\sigma_1,\sigma_2,\ldots,\sigma_m\}$ is a diagonal matrix with singular values. 
 \begin{equation}
        \begin{aligned}
        \left\langle G(U),I\right\rangle_F =&  \frac{1}{2\tau} \mathrm{Tr}\left(UU^\top\right)-  \frac{e^\tau}{\tau(e^{2\tau}-1)}  \mathrm{Tr}\left[\left( I+ (e^{2\tau}-1) U U^\top\right)^{\frac12}-I\right]  \\
        =&  \frac{1}{2\tau} \mathrm{Tr}\left(\Sigma^2\right)-  \frac{e^\tau}{\tau(e^{2\tau}-1)}  \mathrm{Tr}\left[\left( I+ (e^{2\tau}-1) \Sigma^2\right)^{\frac12}-I\right]\\
         =&  \sum_{i=1}^m \left[\frac{1}{2\tau} \sigma_i^2-\frac{e^\tau}{\tau(e^{2\tau}-1)} \left((1+(e^{2\tau}-1) \sigma_i^2)^{\frac12}-1\right) \right]\geq -\frac14 m,
         \end{aligned}
    \end{equation}
    where we use the fact that
    \begin{equation}
        \frac{1}{2\tau} s-\frac{e^\tau}{\tau(e^{2\tau}-1)} \left((1+(e^{2\tau}-1) s)^{\frac12}-1\right) \geq -\frac14,\quad \forall \tau>0,~s\geq 0.
    \end{equation}

Let $U = \left(u_{ij}\right)_{1\leq i,j\leq m}$.
According to the definition of $G(U)$ and Lemma \ref{lem:ineqTr}, we compute
    \begin{equation}
        \begin{aligned}
        & \left\langle G(U),I\right\rangle_F
         -  \left\langle G(\mathrm{diag}(U)),I\right\rangle_F 
         = \mathrm{Tr}(G(U)) -\mathrm{Tr}(G(\mathrm{diag}(U))) \\
        = & \frac{1}{2\tau} \mathrm{Tr}\left(UU^\top-\diag^2(U)\right) -  \frac{e^\tau}{\tau(e^{2\tau}-1)} \\
        &\quad \left\{ \mathrm{Tr}\left[\left( I+ (e^{2\tau}-1) U U^\top\right)^{\frac12}\right] 
        -\mathrm{Tr}\left[\left( I+ (e^{2\tau}-1) \mathrm{diag}^2(U)\right)^{\frac12}\right] \right\} \\
        \leq & \frac{1}{2\tau} \mathrm{Tr}\left(UU^\top-\diag^2(U)\right)
        =\frac1{2\tau}\|U-\mathrm{diag}(U)\|_F^2.
         \end{aligned}
    \end{equation}
\end{proof}

\begin{thm}[Unconditional energy dissipation]\label{thm:energy}
For any $\tau>0$, the solution of the Strang splitting method \eqref{eq:Strang}
satisfies the following energy dissipation property 
\begin{align}
\widetilde E({U}^{n+1})\le \widetilde E({U}^{n}),\quad \forall\, n\ge 0,
\end{align}
where 
\begin{align}
\widetilde E({U}) =\int_\Omega \frac{1}{2\tau}\left\langle (1-e^{\tau \varepsilon^2\Delta})  {U},  {U} \right\rangle_F + \left\langle G(e^{\tau/2 \varepsilon^2\Delta} U),I\right\rangle_F \, dx \label{eq:modifiedenergy}
\end{align}
and $G$ is defined in \eqref{eq:funG}.

\end{thm}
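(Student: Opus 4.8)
The plan is to recast the modified energy so that its double-well part becomes a single convex spectral functional, and then to exploit an \emph{exact} identification between the gradient of that functional and the nonlinear propagator $\mathcal S_{\mathcal N}(\tau)$. Write $L := \mathcal S_{\mathcal L}(\tau/2) = e^{\tau\eps^2\Delta/2}$, which is self-adjoint and a contraction on $L^2(\Omega)$, and set $a := e^{2\tau}-1$, $c := \frac{e^\tau}{\tau(e^{2\tau}-1)}$. Introduce the spectral function
\[
\Psi(Y) := \mathrm{Tr}\!\left((I + a\,YY^\top)^{1/2}\right) = \sum_{i=1}^m \sqrt{1 + a\,\sigma_i(Y)^2}.
\]
By the nuclear-norm representation used in the proof of Lemma \ref{lem:ineqTr}, $\Psi(Y) = \|[\,I~~a^{1/2}Y\,]\|_*$ is a convex and smooth function of $Y$, with gradient $\nabla\Psi(Y) = a\,(I+aYY^\top)^{-1/2}Y$. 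A direct computation from the definition \eqref{eq:funG} of $G$ then collapses the modified energy to
\[
\widetilde E(U) = \frac{1}{2\tau}\,\|U\|^2 - c\!\int_\Omega \Psi(LU)\,dx + c\,m\,|\Omega|,
\]
where $\|\cdot\|$ and $\langle\cdot,\cdot\rangle$ denote the $L^2(\Omega)$ Frobenius norm and inner product: the $-\frac{1}{2\tau}\|LU\|^2$ coming from $\langle(1-L^2)U,U\rangle_F$ cancels exactly the $+\frac{1}{2\tau}\|LU\|_F^2$ hidden in $\langle G(LU),I\rangle_F$.

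With this reformulation, put $V := LU^n$, $W := \mathcal S_{\mathcal N}(\tau)V$, so that $U^{n+1} = LW$ and $LU^{n+1} = L^2W$. Since the additive constant cancels,
\[
\widetilde E(U^{n+1}) - \widetilde E(U^n) = \frac{1}{2\tau}\big(\|U^{n+1}\|^2 - \|U^n\|^2\big) - c\!\int_\Omega\big[\Psi(LU^{n+1}) - \Psi(V)\big]\,dx.
\]
I would bound the potential difference \emph{from below} by the convexity (tangent-line) inequality for $\Psi$ anchored at $V$, namely $\Psi(LU^{n+1}) - \Psi(V) \ge \langle \nabla\Psi(V),\, LU^{n+1}-V\rangle_F$ pointwise; multiplying by $-c<0$ converts this into an upper bound for the energy difference.

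The crux is the observation that $\nabla\Psi(V)$ is, up to the scalar $c$, exactly the output of the nonlinear solve: from \eqref{3t1}, $W = e^\tau(I+aVV^\top)^{-1/2}V$, hence $\nabla\Psi(V) = a\,e^{-\tau}W$ and therefore $c\,\nabla\Psi(V) = \tfrac{1}{\tau}W$. Substituting this and using the self-adjointness of $L$ (so that $\langle W, LU^{n+1}\rangle = \|U^{n+1}\|^2$ and $\langle W, V\rangle = \langle LW, U^n\rangle = \langle U^{n+1}, U^n\rangle$), all terms reorganize into
\[
\widetilde E(U^{n+1}) - \widetilde E(U^n) \le -\frac{1}{2\tau}\Big(\|U^{n+1}\|^2 + \|U^n\|^2 - 2\langle U^{n+1},U^n\rangle\Big) = -\frac{1}{2\tau}\,\|U^{n+1}-U^n\|^2 \le 0,
\]
where the last equality is just Young's inequality written as an identity. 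This proves the claim, and in fact the sharper dissipation with the explicit defect $\tfrac{1}{2\tau}\|U^{n+1}-U^n\|^2$.

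The step I expect to be the main obstacle — and the heart of the argument — is finding the right reformulation together with the gradient identity $c\,\nabla\Psi(V) = \tfrac1\tau W$; once it is in hand, convexity and Young's inequality close the estimate with \emph{no} constraint on $\tau$, which is exactly what removes the time-step restriction of \cite{li2022stability2}. Note that the lower bound $\langle G(U),I\rangle_F \ge -\tfrac{m}{4}$ in Lemma \ref{lem2} is not needed for dissipation itself; rather it guarantees that $\widetilde E$ is bounded below, a fact that will be used in the subsequent $H^1$-stability analysis.
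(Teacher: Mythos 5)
Your proof is correct and unconditional as claimed, but it does not follow the proof the paper attaches to Theorem \ref{thm:energy}; rather, it essentially rediscovers---and sharpens---the paper's second, optimization-based derivation in Section \ref{sec:connections}. The proof in Section \ref{sec:strang_operator_splitting} is pointwise-algebraic: setting $\tilde U^n=\mathcal S_{\mathcal L}(\tau/2)U^n$, it tests the rewritten scheme against $\tilde U^{n+1}-\tilde U^n$, diagonalizes via the SVD of $\tilde U^n$, Taylor-expands the scalar profile $g$ using $g''\le 1$, and controls the off-diagonal entries of $U_0^\top\tilde U^{n+1}V_0$ through Lemma \ref{lem2}, whose proof rests on the Jensen-type trace inequality of Lemma \ref{lem:ineqTr}---the paper's advertised new ingredient. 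You instead observe the exact cancellation of the $\frac{1}{2\tau}\|LU\|_{L^2}^2$ terms (with $L=e^{\tau\eps^2\Delta/2}$, $a=e^{2\tau}-1$, $c=e^\tau/(\tau(e^{2\tau}-1))$), reducing $\widetilde E(U)$ to $\frac{1}{2\tau}\|U\|_{L^2}^2-c\int_\Omega\Psi(LU)\,dx+cm|\Omega|$ with $\Psi(Y)=\mathrm{Tr}\bigl[(I+aYY^\top)^{1/2}\bigr]$ convex by the same nuclear-norm identity that closes the proof of Lemma \ref{lem:ineqTr}; this is precisely the splitting $\widetilde E=\widetilde E_1-\widetilde E_2$ used in Section \ref{sec:connections}, where the paper re-proves Theorem \ref{thm:energy} by characterizing $U^{n+1}$ as the minimizer of the linearized functional and invoking concavity of $-\widetilde E_2$. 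Your computations check out: $\nabla\Psi(Y)=a(I+aYY^\top)^{-1/2}Y$ (which is legitimate since $I+aYY^\top\succeq I$ keeps $\Psi$ smooth, and is verified in the paper as the Fr\'echet derivative of $\widetilde E_2$), hence $c\,\nabla\Psi(V)=\tau^{-1}W$ for $V=LU^n$, $W=\mathcal S_{\mathcal N}(\tau)V$, and the tangent-line inequality plus self-adjointness of $L$ give $\widetilde E(U^{n+1})-\widetilde E(U^n)\le-\frac{1}{2\tau}\|U^{n+1}-U^n\|_{L^2}^2$. Relative to the paper's two arguments, your route buys two things: it bypasses Lemmas \ref{lem:ineqTr}--\ref{lem2} entirely in the dissipation proof (convexity of the nuclear norm replaces the Jensen/SVD computation), and it makes the dissipation defect explicit, which the optimization proof does not; reassuringly, your defect agrees with the Section-\ref{sec:strang_operator_splitting} proof, since the defect there, $\frac{1}{2\tau}\int_\Omega\|\tilde U^{n+1}-\tilde U^n\|_F^2\,dx+\frac{1}{2\tau}\int_\Omega\langle(e^{-\tau\eps^2\Delta}-1)(\tilde U^{n+1}-\tilde U^n),\tilde U^{n+1}-\tilde U^n\rangle_F\,dx$, telescopes (using $\tilde U^{n+1}-\tilde U^n=L(U^{n+1}-U^n)$) to exactly $\frac{1}{2\tau}\|U^{n+1}-U^n\|_{L^2}^2$. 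Your closing remark is also accurate: the lower bound $\langle G(U),I\rangle_F\ge-\frac{m}{4}$ in Lemma \ref{lem2} plays no role in dissipation and is used only to bound $\widetilde E$ from below in the $H^1$-stability argument of Lemma \ref{lem:stability_Utilde}.
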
 
\begin{proof}
Let $\tilde U^n = \mathcal S_{\mathcal L}\left( \tau/2\right) U^n$.
According to \eqref{eq:Strang}, we have
\begin{align}
e^{-\tau \varepsilon^2\Delta}\tilde{U}^{n+1}  = \left((e^{2\tau}-1) \tilde U^n {(\tilde U^n)}^\top  +  I\right)^{-\frac12} e^\tau \tilde U^n.
\end{align}
We rewrite the above as
\begin{align} \label{t7}
\frac 1 {\tau} (e^{-\tau \varepsilon^2\Delta}-1)\tilde{U}^{n+1} + \frac 1 {\tau}(\tilde{U}^{n+1}-\tilde{U}^{n}) = \frac 1 {\tau} \left(  {\left((e^{2\tau}-1) \tilde{U}^n {(\tilde U^n)}^\top  + I\right)^{-\frac12}} {e^\tau\tilde{U}^n}-\tilde{U}^{n} \right).
\end{align}
Taking the Frobenius inner product with  $ 
\tilde U^{n+1} - \tilde U^{n} $ on both sides of \eqref{t7}, we obtain
\begin{equation}\label{eq:idFrob}
\begin{aligned}
&\frac 1 {\tau} \langle  (e^{-\tau \varepsilon^2\Delta}-1)\tilde{U}^{n+1},\;
\tilde{U}^{n+1}-\tilde{U}^{n} \rangle_F
+ \frac 1 {\tau} \| \tilde{U}^{n+1}-\tilde{U}^{n} \|_F^2  \\
=&\frac 1 {\tau} \left\langle {\left((e^{2\tau}-1) \tilde{U}^n {(\tilde U^n)}^\top  +  I\right)^{-\frac12}} {e^\tau\tilde{U}^n}-\tilde{U}^{n} , \;\;
\tilde{U}^{n+1}-\tilde{U}^{n}  \right\rangle_F.
\end{aligned}
\end{equation}
It is not difficult to check that
\begin{equation}\label{ineq:term1}
\begin{aligned}
 & \int_{\Omega} \langle  (e^{-\tau \varepsilon^2\Delta}-1)\tilde{U}^{n+1},\;
\tilde{U}^{n+1}-\tilde{U}^{n} \rangle_F \,dx \\
=&\; \frac 12 \int_{\Omega} 
\langle  (e^{-\tau \varepsilon^2\Delta}-1)\tilde{U}^{n+1},\;
\tilde{U}^{n+1} \rangle_F\, dx -\frac 12 \int_{\Omega} 
\langle  (e^{-\tau \varepsilon^2\Delta}-1)\tilde{U}^{n},\;
\tilde{U}^{n} \rangle_F \,dx \\
& \quad +\frac 12 \int_{\Omega} 
\langle  (e^{-\tau \varepsilon^2\Delta}-1)(\tilde{U}^{n+1}-\tilde{U}^n),\;
\tilde{U}^{n+1} -\tilde{U}^n\rangle_F\, dx .
\end{aligned}
\end{equation}
Assume that $\tilde U^n$ has singular value decomposition $\tilde U^n = U_0\Sigma V_0^\top$ where $U_0$ and $V_0$ are real orthogonal matrices. 
Let 
\begin{equation}
    U_0^\top\tilde{U}^{n+1}V_0 = \left(\tilde u_{ij}\right).
\end{equation}
Then
\begin{equation}\label{eq:rhs}
    \begin{aligned}
        &\left\langle {\left((e^{2\tau}-1) \tilde{U}^n {(\tilde U^n)}^\top  +  I\right)^{-\frac12}} {e^\tau\tilde{U}^n}-\tilde{U}^{n} , \;\tilde{U}^{n+1}-\tilde{U}^{n}  \right\rangle_F \\
        =&  \left\langle U_0 \left[e^\tau\left((e^{2\tau}-1) \Sigma^2 +  I\right)^{-\frac12} -I\right] \Sigma V_0^\top, \;\tilde{U}^{n+1}-U_0\Sigma V_0^\top  \right\rangle_F \\
        =&  \left\langle \left[e^\tau\left((e^{2\tau}-1) \Sigma^2 +  I\right)^{-\frac12} -I\right] \Sigma, \; U_0^\top\tilde{U}^{n+1}V_0-\Sigma \right\rangle_F \\
        = & \sum_{i=1}^m \left[e^\tau\left((e^{2\tau}-1) \sigma_i^2 +  1\right)^{-\frac12} -1\right] \sigma_i (\tilde u_{ii}-\sigma_i) \\
         = & \sum_{i=1}^m -g'(\sigma_i) (\tilde u_{ii}-\sigma_i) =\sum_{i=1}^m -g(\tilde u_{ii})+g(\sigma_i)+\frac12 g''(\xi_i)(\tilde u_{ii}-\sigma_i)^2\\
        = &\sum_{i=1}^m -g(\tilde u_{ii})+g(\sigma_i)+\frac12 (\tilde u_{ii}-\sigma_i)^2,
    \end{aligned}
\end{equation}
where $\xi_i$ is some constant between $\tilde u_{ii}$ and $\sigma_i$,
\begin{equation}
    g(\lambda) = \frac12 \lambda^2 - \frac{e^\tau}{e^{2\tau}-1} \left[\left(1+(e^{2\tau}-1)\lambda^2\right)^{\frac12}-1\right]
\end{equation}
and 
\begin{equation}
    g''(\lambda) = -e^\tau\left(1+(e^{2\tau}-1)\lambda^2\right)^{-\frac32}+1\leq 1,\quad \forall \lambda\in\mathbb R.
\end{equation}
According to the definition of $G(U)$, we have
\begin{equation}\label{eq:Gun}
    \begin{aligned}
         \left\langle G({\tilde U}^n),I\right\rangle_F = \mathrm{Tr}(G({\tilde U}^n)) = \mathrm{Tr}(G(\Sigma)) = \left\langle G(\Sigma),I\right\rangle_F = \frac1\tau \sum_{i=1}^m g(\sigma_{i})
    \end{aligned}
\end{equation}
and
\begin{equation}\label{ineq:Gun1}
    \begin{aligned}
         &\left\langle G({\tilde U}^{n+1}),I\right\rangle_F 
         =  \left\langle G(U_0^\top\tilde{U}^{n+1}V_0),I\right\rangle_F\\
         \leq & \left\langle G(\mathrm{diag}(U_0^\top\tilde{U}^{n+1}V_0)),I\right\rangle_F + \frac1{2\tau}\|U_0^\top\tilde{U}^{n+1}V_0-\mathrm{diag}(U_0^\top\tilde{U}^{n+1}V_0)\|_F^2  \\
         = & \left\langle G(\mathrm{diag}(U_0^\top\tilde{U}^{n+1}V_0)),I\right\rangle_F + \frac1{2\tau} \sum_{i\neq j} \tilde u_{ij}^2\\
         = & \frac1\tau \sum_{i=1}^m g(\tilde u_{ii})+ \frac1{2\tau} \sum_{i\neq j} \tilde u_{ij}^2.
    \end{aligned}
\end{equation}
Here the inequality in \eqref{ineq:Gun1} is deduced from Lemma \ref{lem2}.
From \eqref{eq:rhs}, \eqref{eq:Gun} and \eqref{ineq:Gun1}, we have
\begin{equation}\label{ineq:term3}
\begin{aligned}
&\int_{\Omega} \frac 1 {\tau} \left\langle {\left((e^{2\tau}-1) \tilde{U}^n {(\tilde U^n)}^\top  +  I\right)^{-\frac12}} {e^\tau\tilde{U}^n}-\tilde{U}^{n} , \;\;
\tilde{U}^{n+1}-\tilde{U}^{n}  \right\rangle_F  \,dx\\
\le & \;\int_{\Omega} \left\langle G(\tilde U^{n}),I\right\rangle  \,dx  - \int_{\Omega} \left\langle G(\tilde U^{n+1}),I\right\rangle \,dx
+ \frac1{2\tau} \sum_{i\neq j} \tilde u_{ij}^2
+\frac 1 {2\tau} \sum_{i=1}^m (\tilde u_{ii}-\sigma_i)^2 \\
= & \; \int_{\Omega} \left\langle G(\tilde U^{n}),I\right\rangle  \,dx  - \int_{\Omega} \left\langle G(\tilde U^{n+1}),I\right\rangle \,dx
+\frac 1 {2\tau}  \int_{\Omega} \| \tilde U^{n+1}
-\tilde U^n\|_F^2 \,dx.
\end{aligned}
\end{equation}
Combining \eqref{eq:idFrob}, \eqref{ineq:term1}, and \eqref{ineq:term3}, it follows that
\begin{equation}
\begin{aligned}
&\widetilde E( {U}^{n+1})- \widetilde E( {U}^{n})\\
=& \int_\Omega \frac{1}{2\tau}\left\langle (1-e^{\tau \varepsilon^2\Delta})  {U^{n+1}},  {U^{n+1}} \right\rangle_F + \left\langle G(e^{\tau/2 \varepsilon^2\Delta} U^{n+1}),I\right\rangle_F \, dx\\
& -\int_\Omega \frac{1}{2\tau}\left\langle (1-e^{\tau \varepsilon^2\Delta})  {U^n},  {U^n} \right\rangle_F - \left\langle G(e^{\tau/2 \varepsilon^2\Delta} U^n),I\right\rangle_F \, dx \\
=& \frac 12 \int_{\Omega} 
\langle  (e^{-\tau \varepsilon^2\Delta}-1)\tilde{U}^{n+1},\;
\tilde{U}^{n+1} \rangle_F\, dx + \int_{\Omega} \left\langle G(\tilde U^{n+1}),I\right\rangle \,dx \\
& -\frac 12 \int_{\Omega} 
\langle  (e^{-\tau \varepsilon^2\Delta}-1)\tilde{U}^{n},\;
\tilde{U}^{n} \rangle_F \,dx - \int_{\Omega} \left\langle G(\tilde U^{n}),I\right\rangle  \,dx  
\\
\leq &-\frac 1 {2\tau}  \int_{\Omega} \| \tilde U^{n+1}
-\tilde U^n\|_F^2 \,dx -\frac 1{2\tau} \int_{\Omega} 
\langle  (e^{-\tau \varepsilon^2\Delta}-1)(\tilde{U}^{n+1}-\tilde{U}^n),\;
\tilde{U}^{n+1} -\tilde{U}^n\rangle_F\, dx\\
\leq &0.
\end{aligned}
\end{equation}
\end{proof}

\section{Uniform stability and convergence analysis}\label{sec:stability_convergence} 

The numerical solution of Strang splitting scheme for matrix-valued Allen--Cahn equation preserves the maximum principle unconditionally. 
We now state and prove the maximum principle result slightly different from the one in \cite{li2022stability}. 
\begin{lem}[Maximum principle]\label{lem:MP}
If $ \| U^0 \|_F\leq \sqrt m$ for any $x\in \overline\Omega$, then the solution of  Strang splitting method \eqref{eq:Strang} satisfies the maximum principle:
\begin{equation}
\begin{aligned}
 \| U^n \|_F \le  \sqrt m,\quad \forall n\geq 1.
\end{aligned}
\end{equation}
\end{lem}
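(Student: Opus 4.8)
The plan is to show that the bound $\|V(x)\|_F \le \sqrt m$ is preserved by \emph{each} of the three sub-steps of the Strang update \eqref{eq:Strang}, so that the full statement follows by a trivial induction on $n$ starting from the hypothesis on $U^0$. Writing the update as the composition $U^{n+1} = \mathcal S_{\mathcal L}(\tau/2)\,\mathcal S_{\mathcal N}(\tau)\,\mathcal S_{\mathcal L}(\tau/2)U^n$, it suffices to prove that each operator $\mathcal S_{\mathcal L}(t)$ and $\mathcal S_{\mathcal N}(t)$ (with $t=\tau/2$ and $t=\tau$) maps the set $\{V:\ \|V(x)\|_F\le\sqrt m\ \text{for all }x\}$ into itself.

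For the linear propagator I would use that on the periodic torus $e^{t\varepsilon^2\Delta}$ acts entrywise as convolution with the periodic heat kernel $K_t$, which is nonnegative and of unit mass, $\int_\Omega K_t(y)\,dy = 1$. Hence $(\mathcal S_{\mathcal L}(t)V)(x) = \int_\Omega K_t(x-y)\,V(y)\,dy$ is a matrix-valued average of $V$ against a probability measure. Since the Frobenius norm is convex, Jensen's inequality gives $\|(\mathcal S_{\mathcal L}(t)V)(x)\|_F \le \int_\Omega K_t(x-y)\,\|V(y)\|_F\,dy \le \sqrt m$, so the linear step preserves the bound. Note that the componentwise heat-equation maximum principle is not enough here, since it controls each entry separately rather than the joint Frobenius norm; the convolution representation is what handles the matrix structure.

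The crux is the nonlinear propagator, which acts pointwise in $x$. Fix $x$, write $V=V(x)$ with singular value decomposition $V=P\Sigma Q^\top$, $\Sigma=\diag(\sigma_1,\dots,\sigma_m)$, so that $\|V\|_F^2=\sum_i\sigma_i^2$. Inserting the SVD into the closed form \eqref{3t1} and using $VV^\top = P\Sigma^2 P^\top$, a direct computation yields $\mathcal S_{\mathcal N}(t)V = e^t P\,\diag\!\big(\sigma_i/\sqrt{(e^{2t}-1)\sigma_i^2+1}\big)\,Q^\top$, so the singular values of $\mathcal S_{\mathcal N}(t)V$ are $\tilde\sigma_i = e^t\sigma_i/\sqrt{(e^{2t}-1)\sigma_i^2+1}$. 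Setting $s_i=\sigma_i^2$ and $\phi(s)=e^{2t}s/\big((e^{2t}-1)s+1\big)$, the claim $\|\mathcal S_{\mathcal N}(t)V\|_F^2 = \sum_i\phi(s_i)\le m$ must be deduced from $\sum_i s_i\le m$. I would verify that $\phi$ is increasing and concave on $[0,\infty)$, with the key normalizing facts $\phi(1)=1$ and $\phi'(1)=e^{-2t}\le1$; concavity then gives the tangent-line bound $\phi(s)\le 1+e^{-2t}(s-1)$, and summing produces $\sum_i\phi(s_i)\le m+e^{-2t}\big(\sum_i s_i-m\big)\le m$. This is precisely where the nonlinear flow exhibits its stabilizing character: the scalar fixed point $\sigma=1$ corresponds to orthogonal matrices, and the subunit slope $e^{-2t}$ there is what pushes the sum of squared singular values back below $m$.

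The main obstacle is the nonlinear step, and within it the reduction to the scalar function $\phi$ together with the choice of the linearization point $s=1$; the concavity and the bound $\phi'(1)\le1$ are the essential analytic inputs. Once the convolution representation of the heat semigroup is in hand, the linear step and the concluding induction are routine.
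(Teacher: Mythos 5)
Your proof is correct and follows essentially the same route as the paper: the paper's proof consists precisely of the observation that both $\mathcal S_{\mathcal L}$ and $\mathcal S_{\mathcal N}$ map $\{\|V\|_F\le\sqrt m\}$ into itself (stated there as ``not difficult to verify''), followed by the same induction. Your write-up simply supplies the verification the paper omits---heat-kernel positivity with Jensen for the linear step, and the SVD reduction with the tangent-line bound $\phi(s)\le 1+e^{-2t}(s-1)$ for the nonlinear step---and both computations are sound.
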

\begin{proof}
    It is not difficult to verify that both the linear and nonlinear solution operators preserves the maximum principle: for any $\|V\|_F \leq \sqrt m$, then $\|\mathcal S_{\mathcal L}(\tau) V \|_F\leq \sqrt m$ and $\|\mathcal S_{\mathcal N}(\tau) V \|_F\leq \sqrt m$. 
\end{proof}

\subsection{Global-in-time $H^1$-stability}
We now establish uniform $H^1$-stability of the Strang splitting method for matrix-valued Allen--Cahn equation, that is based on the maximum principle, Lemma \ref{lem:MP} and thus
different from the scalar case in \cite{li2022stability}.

In the following content, the $H^k$-norm of a matrix-valued function $U$ is defined as
\begin{equation}\label{eq:HkMatrix}
\begin{aligned}
 \left\| U\right\|_{H^k(\Omega)} \coloneqq \left( \sum_{i,j=1}^m \left\|U_{ij}\right\|^2_{H^k(\Omega)}\right)^{\frac12}
\end{aligned}
\end{equation}
and $\| \nabla U \|_F$ is defined as
\begin{equation}
    \| \nabla U \|_F \coloneqq \left( \sum_{i,j=1}^m |\nabla U_{ij}|^2\right)^{\frac12}.
\end{equation}

\begin{lem}[Global-in-time $H^1$-stability of $\tilde U^n$]\label{lem:stability_Utilde}
If $ \| U^0\|_F^2 \leq  m$ for any $x\in \overline\Omega$, then the solution of Strang splitting method \eqref{eq:Strang} satisfies 
\begin{equation}
\begin{aligned}
 \left\|\| \nabla \tilde U^n \|_F \right\|_{L^2(\Omega)}^2\le  C,\quad\forall n\geq 0,
\end{aligned}
\end{equation}
where $C$ depends only on $\varepsilon$, $\left\| U^0\right\|_{H^1}$, $m$, and $\Omega$.
\end{lem}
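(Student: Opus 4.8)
The plan is to read off the $H^1$-bound on $\tilde U^n$ directly from the modified energy \eqref{eq:modifiedenergy}, exploiting that $\widetilde E(U^n)\le\widetilde E(U^0)$ for all $n$ by Theorem \ref{thm:energy}. First I would rewrite $\widetilde E$ entirely in terms of $\tilde U^n=\mathcal S_{\mathcal L}(\tau/2)U^n$. Working mode-by-mode in the Fourier basis, where $e^{\tau\varepsilon^2\Delta}$ multiplies the mode $k$ by $e^{-\tau\varepsilon^2|k|^2}$, one checks the operator identity $\frac1{2\tau}\langle(1-e^{\tau\varepsilon^2\Delta})U,U\rangle_F=\frac1{2\tau}\langle(e^{-\tau\varepsilon^2\Delta}-1)\tilde U,\tilde U\rangle_F$ with $\tilde U=e^{\tau/2\,\varepsilon^2\Delta}U$, while the second summand of \eqref{eq:modifiedenergy} is already $\langle G(\tilde U),I\rangle_F$. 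Thus $\widetilde E(U^n)=\int_\Omega\frac1{2\tau}\langle(e^{-\tau\varepsilon^2\Delta}-1)\tilde U^n,\tilde U^n\rangle_F+\langle G(\tilde U^n),I\rangle_F\,dx$.

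Next I would produce a lower bound on $\widetilde E(U^n)$ that isolates the gradient. For the quadratic term, the elementary inequality $e^y-1\ge y$ at $y=\tau\varepsilon^2|k|^2$ gives, mode-by-mode, $\frac1{2\tau}(e^{\tau\varepsilon^2|k|^2}-1)\ge\frac{\varepsilon^2}{2}|k|^2$, hence $\int_\Omega\frac1{2\tau}\langle(e^{-\tau\varepsilon^2\Delta}-1)\tilde U^n,\tilde U^n\rangle_F\,dx\ge\frac{\varepsilon^2}{2}\left\|\|\nabla\tilde U^n\|_F\right\|_{L^2(\Omega)}^2$. For the double-well term, the pointwise bound $\langle G(\tilde U^n),I\rangle_F\ge-\tfrac{m}{4}$ from Lemma \ref{lem2} yields $\int_\Omega\langle G(\tilde U^n),I\rangle_F\,dx\ge-\tfrac{m}{4}|\Omega|$. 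Combining, $\widetilde E(U^n)\ge\frac{\varepsilon^2}{2}\left\|\|\nabla\tilde U^n\|_F\right\|_{L^2(\Omega)}^2-\tfrac{m}{4}|\Omega|$.

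It then remains to bound $\widetilde E(U^0)$ from above by a constant independent of $\tau$. The quadratic term is handled by the companion inequality $1-e^{-y}\le y$, giving $\int_\Omega\frac1{2\tau}\langle(1-e^{\tau\varepsilon^2\Delta})U^0,U^0\rangle_F\,dx\le\frac{\varepsilon^2}{2}\|U^0\|_{H^1}^2$. For the double-well term I would invoke the maximum principle (Lemma \ref{lem:MP}): since the linear propagator preserves $\|\cdot\|_F\le\sqrt m$, the squared singular values $s_i=\sigma_i^2(\tilde U^0)$ of $\tilde U^0=\mathcal S_{\mathcal L}(\tau/2)U^0$ satisfy $\sum_i s_i=\|\tilde U^0\|_F^2\le m$. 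Writing $\langle G(\tilde U^0),I\rangle_F=\sum_i\phi(s_i)$ with $\phi(s)=\frac1{2\tau}s-\frac{e^\tau}{\tau(e^{2\tau}-1)}\big((1+(e^{2\tau}-1)s)^{1/2}-1\big)$ as in \eqref{eq:funG}, the crux is the $\tau$-uniform estimate $\phi(s)\le\tfrac14 s^2$; granting it, $\sum_i\phi(s_i)\le\tfrac14\sum_i s_i^2\le\tfrac14(\sum_i s_i)^2\le\tfrac{m^2}{4}$, so $\int_\Omega\langle G(\tilde U^0),I\rangle_F\,dx\le\tfrac{m^2}{4}|\Omega|$. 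Assembling the three estimates with Theorem \ref{thm:energy} gives $\frac{\varepsilon^2}{2}\left\|\|\nabla\tilde U^n\|_F\right\|_{L^2(\Omega)}^2\le\widetilde E(U^0)+\tfrac{m}{4}|\Omega|\le\frac{\varepsilon^2}{2}\|U^0\|_{H^1}^2+\tfrac{m^2+m}{4}|\Omega|$, which is precisely the claimed bound with $C=\|U^0\|_{H^1}^2+\frac{m^2+m}{2\varepsilon^2}|\Omega|$, depending only on $\varepsilon$, $\|U^0\|_{H^1}$, $m$, and $\Omega$.

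The hard part will be the $\tau$-uniform estimate $\phi(s)\le\tfrac14 s^2$, since the naive bounds $\phi(s)\le\frac{s}{2\tau}$ and the small-$\tau$ Taylor bound both degenerate as $\tau\to0$ or $\tau\to\infty$. My plan here is to use the rationalization $\sqrt{1+x}-1=x/(1+\sqrt{1+x})$ to obtain the closed form $\phi(s)=\frac{s}{2\tau}\cdot\frac{1+w-2e^\tau}{1+w}$ with $w=\sqrt{1+(e^{2\tau}-1)s}\ge1$. For $s\le2$ this is nonpositive, because $w\le 2e^\tau-1$ is equivalent to the trivial inequality $2(e^\tau-1)^2\ge0$; for $s>2$ the claim reduces to a single elementary two-variable inequality in $(w,\tau)$ that can be verified by calculus, its small-$\tau$ limit being the expected double-well density $\tfrac14 s(s-2)\le\tfrac14 s^2$. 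This exact algebraic rewriting, rather than a perturbative expansion, is what makes the estimate uniform across all time steps.
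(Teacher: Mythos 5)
Your proposal is correct and follows the same skeleton as the paper's proof: rewrite $\widetilde E(U^n)$ in terms of $\tilde U^n$ via the Fourier identity, bound the quadratic term below by $\frac{\varepsilon^2}{2}\left\|\|\nabla\tilde U^n\|_F\right\|_{L^2(\Omega)}^2$ using $e^y-1\ge y$, absorb the potential term with the $-\frac{m}{4}$ lower bound of Lemma \ref{lem2}, invoke the dissipation $\widetilde E(U^n)\le\widetilde E(U^0)$ from Theorem \ref{thm:energy}, and bound $\widetilde E(U^0)$ above using $1-e^{-y}\le y$ together with a pointwise estimate of the $G$-term through the singular values of $\tilde U^0$ with $\sum_i\tilde\sigma_i^2\le m$. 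The one genuinely different ingredient is that last pointwise estimate: the paper splits into the cases $\tau\ge 1$ and $\tau<1$ (using a Taylor-type lower bound on the square root for small $\tau$) and settles for the generous constant $3m^3|\Omega|$, whereas you prove the single $\tau$-uniform inequality $\phi(s)\le\frac14 s^2$ by the rationalization $\sqrt{1+x}-1=x/(1+\sqrt{1+x})$, which eliminates the case split and yields the sharper constant $\frac{m^2}{4}|\Omega|$. Your sketch of the hard branch $s>2$ does close: writing $a=e^\tau$ and $w=\sqrt{1+(a^2-1)s}$, the claim becomes $w+1-2a\le \frac{\tau(w-1)(w+1)^2}{2(a^2-1)}$; for $w\le 2a-1$ the left side is nonpositive (your $2(e^\tau-1)^2\ge 0$ observation), while for $w\ge 2a-1$ the right side equals $\frac{4\tau a^2}{a+1}>0$ at the endpoint where the left side vanishes, and its $w$-derivative $\frac{\tau(w+1)(3w-1)}{2(a^2-1)}$ is increasing in $w$ and at the endpoint equals $\frac{2\tau a(3a-2)}{a^2-1}\ge 3-\frac{2}{a}\ge 1$ by $e^{2\tau}-1\le 2\tau e^{2\tau}$, so the gap only grows. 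Net comparison: identical theorem with the same dependencies of $C$; your route buys a cleaner, uniform-in-$\tau$ potential estimate (and a better constant, consistent with the small-$\tau$ limit $\frac14 s(s-2)$ of the double-well density) at the cost of one elementary two-variable calculus verification that the paper sidesteps by case analysis.
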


\begin{proof}
Let $\widehat U^0$ be the Fourier transformation of $U^0$, defined by
\begin{equation}
    \left(\widehat U^0(k)\right)_{ij} = \widehat{U^0_{ij}}(k),\quad i,j = 1,2,\ldots, m,\mbox{ and } k \in \mathbb Z^d,
\end{equation}
where $U_{ij}^0$ is the $(i,j)$th element of $U^0$.
Using  the fact
\begin{equation}
    \frac{1-e^{-\tau \varepsilon^2|k|^2}}{\tau}\leq \varepsilon^2|k|^2,\quad \forall \tau>0,
\end{equation}
we have
\begin{equation}\label{eq:E0_1}
    \begin{aligned}
       & \int_\Omega \frac{1}{2\tau}\left\langle (1-e^{\tau \varepsilon^2\Delta})  {U^0},  {U^0} \right\rangle_F \\
   = &c_d \sum_{i,j=1}^m \sum_{0\neq k\in \mathbb Z^d}\frac{1-e^{-\tau \varepsilon^2|k|^2}}{2\tau} \left( \widehat U^0(k)\right)_{ij}^2
   \leq   c_d \sum_{i,j=1}^m \sum_{0\neq k\in \mathbb Z^d}\frac{\varepsilon^2|k|^2}{2} \left( \widehat U^0(k)\right)_{ij}^2 \\
   \leq & \sum_{i,j=1}^m \frac {\varepsilon^2} 2 \left\| U^0_{ij}(x) \right\|_{H^1}^2 
   = \frac{\varepsilon^2}2 \left\| U^0\right\|_{H^1}^2,
\end{aligned}
\end{equation}
where $c_d>0$ is a normalization constant depending on the spatial dimension $d$. 
Consider the singular value decomposition 
\begin{equation}
\tilde U^0 =e^{\frac{\tau \varepsilon^2}{2} \Delta}U^0 = \tilde P\tilde \Sigma \tilde Q
\end{equation}
with singular values $\{\tilde\sigma_i\}_{i=1}^m$ and orthogonal matrices $\tilde P$ and $\tilde Q$. 
Due to the fact
\begin{equation}
    \|\tilde U^0\|_F^2 \leq  \| U^0\|_F^2 \leq  m,
\end{equation}
then we have
\begin{equation}
    \tilde\sigma_i^2(x) \leq \sum_{i=1}^m \tilde\sigma_i^2(x)  =\|\tilde U^0\|_F^2 \leq  m.
\end{equation}
Therefore, we have
\begin{equation}\label{eq:E0_2}
\begin{aligned}
     &\int_\Omega \left\langle G(e^{\tau/2\varepsilon^2\Delta}U^0),I\right\rangle_F \, dx =   \int_\Omega \mbox{Tr}(G(e^{\tau/2\varepsilon^2\Delta}U^0)) \, dx
     =  \int_\Omega \mbox{Tr}(G(\tilde \Sigma)) \, dx\\
     = & \sum_{i=1}^m \int_\Omega\left[\frac{1}{2\tau}\tilde \sigma_i^2-\frac{e^\tau}{\tau(e^{2\tau}-1)} \left((1+(e^{2\tau}-1) \tilde\sigma_i^2)^{\frac12}-1\right) \right]\,dx\\
     \leq & \begin{cases}
     \displaystyle
          \sum_{i=1}^m \frac12 \int_\Omega \tilde \sigma_i^2\,dx, & \mbox{if }\tau\geq 1\\
      \displaystyle   \sum_{i=1}^m \int_\Omega \left[-\frac{e^\tau-1}{2\tau} \tilde\sigma_i^2+\frac{e^\tau(e^{2\tau}-1)}{8\tau} \tilde\sigma_i^4\right]\,dx,&\mbox{if } \tau <1
     \end{cases}\\
     \leq & 3m^3|\Omega|,
     \end{aligned}
\end{equation}
where $|\Omega|$ is the volume of $\Omega$.
Combining \eqref{eq:E0_1} and \eqref{eq:E0_2}, we then have
\begin{equation}
\begin{aligned}
    \widetilde E({U^0}) = &  \int_\Omega \frac{1}{2\tau}\left\langle (1-e^{\tau \varepsilon^2\Delta})  {U^0},  {U^0} \right\rangle_F + \left\langle G(e^{\tau/2 \varepsilon^2\Delta}U^0),I\right\rangle_F \, dx \leq \frac{\varepsilon^2}2 \left\| U^0\right\|_{H^1}^2 +3m^3|\Omega|.
    \end{aligned}
\end{equation}

From Lemma \ref{lem2} and Theorem \ref{thm:energy}, we have
\begin{equation}
\begin{aligned}
\frac{\varepsilon^2}2 \left\|\| \nabla \tilde U^n \|_F \right\|_{L^2(\Omega)}^2\le & 
\int_\Omega \frac 1 {2\tau}
\langle (e^{-\tau \varepsilon^2\Delta} -1) \tilde U^n, \, \tilde U^n \rangle_F \le \widetilde E({U^n})+\frac14 m |\Omega| \\
\leq & \widetilde E({U^0})+\frac14 m |\Omega| \leq C,
\end{aligned}
\end{equation}
where $C$ depends on $\varepsilon$, $\left\| U^0\right\|_{H^1}$, $m$, and $\Omega$.

\end{proof}

\begin{lem}[Properties of $\mathcal S_{\mathcal N}$]\label{lem:Uniform_SN}
If $ \| U^0 \|_F\leq \sqrt m$  for any $x\in \overline\Omega$ and $\| \nabla U^0\|_F \in L^2(\Omega)$, then for any time step $\tau>0$, the  solution $U_1=\mathcal S_{\mathcal N}(\tau) U^0$ to the nonlinear ODE \eqref{eq:nlODE} satisfies
\begin{equation}
     \left\|\| \nabla U_1\|_F \right\|_{L^2(\Omega)}\le  C e^{(1+3m) \tau}  \left\|\| \nabla U^0\|_F \right\|_{L^2(\Omega)},\quad\forall t\in[0,\tau],
\end{equation}
where $C$ depends only on $m$ and $d$.
Furthermore, if $ \| U^0\|_F^2 \leq  m$ and $ U^0 \in H^k(\Omega)$ with $k\geq 1$, then
\begin{equation}
\begin{aligned}
 \left\| U_1\right\|_{H^k(\Omega)}\le  C_1 e^{\alpha_{m,k}\tau}\left\| U^0\right\|_{H^k(\Omega)},\quad\forall t\in[0,\tau],
\end{aligned}
\end{equation}
where $C_1>0$ and $\alpha_{m,k}>0$ depend on $m$, $d$, and $k$.
\end{lem}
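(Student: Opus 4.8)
The plan is to exploit that \eqref{eq:nlODE} is, for each fixed $x\in\Omega$, a matrix ODE in $t$, so that I can differentiate it in $x$ and run Frobenius-norm energy estimates on the resulting equations, closing each one with Gr\"onwall's inequality. The uniform pointwise bound $\|U(t)\|_F\le\sqrt m$ from the maximum principle (Lemma \ref{lem:MP}) is the only input about the flow that is needed; I use it through $\|U(x)\|_{\mathrm{op}}\le\|U(x)\|_F\le\sqrt m$, valid uniformly in $x$ and in $t\in[0,\tau]$.

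For the gradient estimate, fix a direction $\ell$, write $V=\partial_{x_\ell}U$, and differentiate the ODE to get
\begin{equation}
\partial_t V = V - \bigl(VU^\top U + UV^\top U + UU^\top V\bigr).
\end{equation}
Pairing with $V$ in the Frobenius inner product and using submultiplicativity $\|AB\|_F\le\|A\|_{\mathrm{op}}\|B\|_F$ together with $\|U\|_{\mathrm{op}}^2\le m$, each of the three cubic terms is bounded in absolute value by $m\|V\|_F^2$, so that $\tfrac{d}{dt}\|V\|_F^2\le 2(1+3m)\|V\|_F^2$. Gr\"onwall gives $\|V(t)\|_F\le e^{(1+3m)t}\|V(0)\|_F$ pointwise in $x$; summing the squares over the $d$ coordinate directions and integrating over $\Omega$ yields the stated $L^2(\Omega)$ bound (in fact with $C=1$, since two of the three cubic terms are nonnegative and could even be discarded).

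For the $H^k$ estimate, I differentiate the ODE by an arbitrary multi-index $\beta$, $|\beta|\le k$, and pair $\partial^\beta U$ with $\partial^\beta\partial_t U = \partial^\beta U - \partial^\beta(UU^\top U)$ in $L^2(\Omega)$. The whole difficulty is to estimate $\|\partial^\beta(UU^\top U)\|_{L^2}$ \emph{linearly} in the top-order norm. This is a Moser--Gagliardo--Nirenberg product inequality: expanding by the Leibniz rule and interpolating the intermediate-order derivatives against $\|U\|_{L^\infty}\le\sqrt m$ gives
\begin{equation}
\|\partial^\beta(UU^\top U)\|_{L^2}\le C_{m,k}\|U\|_{H^k},
\end{equation}
with $C_{m,k}$ depending only on $m,d,k$ and \emph{no} restriction such as $k>d/2$, since the interpolation only uses $U\in L^\infty\cap H^k$. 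Summing over $|\beta|\le k$ and applying Cauchy--Schwarz gives $\tfrac{d}{dt}\|U\|_{H^k}^2\le(2+C_{m,k})\|U\|_{H^k}^2$, and Gr\"onwall closes the estimate with $\alpha_{m,k}=(2+C_{m,k})/2$.

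The main obstacle is precisely this $H^k$ product estimate: one must control every way the $k$ derivatives can be split among the three factors, and in particular bound the cross terms carrying no full block of derivatives, which is where the Gagliardo--Nirenberg interpolation trades intermediate derivatives for the $L^\infty$ bound. Existence and persistence of $H^k$ regularity along the flow, needed to justify the computations, follow from the explicit formula \eqref{3t1} (whose ingredients are smooth functions of $U^0(U^0)^\top$) or from standard ODE theory in $H^k\cap L^\infty$, so the displayed a priori estimates are the substantive content.
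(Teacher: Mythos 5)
Your proof is correct and follows essentially the same route as the paper: you differentiate the ODE \eqref{eq:nlODE} in $x$, pair the resulting equation with $\partial_{x_\ell}U$ in the Frobenius inner product, bound the cubic terms using the pointwise bound $\|U\|_F\le\sqrt m$, and close with Gr\"onwall, exactly as in the paper (your observations that $C=1$ suffices and that two of the three cubic terms are nonnegative are correct refinements). For the $H^k$ part the paper merely says ``by mathematical induction and similar arguments,'' and your Moser--Gagliardo--Nirenberg product estimate $\|\partial^\beta(UU^\top U)\|_{L^2}\le C_{m,k}\|U\|_{H^k}$, valid without any $k>d/2$ restriction thanks to the $L^\infty$ bound, is precisely the standard way to make that sketch rigorous.
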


\begin{proof}

First, it is not difficult to obtain $ \| U_1\|_F^2 \leq  m$ for any $x\in \overline{\Omega}$.
Let $q = \partial_{x_{i}} U_1$, where $\partial_{x_{i}}$ denotes the partial derivative w.r.t. spatial variable $x_{i}$ with $1 \leq i\leq d$. 
Acting $\partial_{x_i}$ on both sides of \eqref{eq:nlODE}, we have
\begin{align} \label{eq:ode_q}
\begin{cases}
\partial_t q = q-q U_1^\top U_1 - U_1 q^\top U_1 -U_1 U_1^\top q, \\
q(0) = q_0\coloneqq \partial_{x_{i}} U^0.
\end{cases}
\end{align}
By taking $\mbox{Tr}(~\cdot ~q^\top )$ on both sides of the first equation in \eqref{eq:ode_q}, we have
\begin{equation}
    \begin{aligned}
    \frac12 \partial_t \| q\|_F^2 & = \| q\|_F^2 -\mbox{Tr}(q U_1^\top U_1 q^\top)-\mbox{Tr}(U_1 q^\top U_1q^\top )-\mbox{Tr}(U_1 q^\top U_1q^\top )\\
    & \leq \| q\|_F^2 + 3 \| U_1\|_F^2 \| q\|_F^2 
     \leq (1+ 3 m) \| q\|_F^2,
    \end{aligned}
\end{equation}
which yields that 
\begin{equation}
    \| q(t,x)\|_F^2 \leq e^{2(1+3m) \tau} \| q(0,x)\|_F^2, \quad \forall t\in[0,\tau].
\end{equation}
This indicates
\begin{equation}
\begin{aligned}
 \left\|\| \nabla U_1\|_F \right\|_{L^2(\Omega)}\le  C e^{(1+3m) \tau}  \left\|\| \nabla  U^0\|_F \right\|_{L^2(\Omega)},\quad\forall t\in[0,\tau],
\end{aligned}
\end{equation}
where $C$ depends only on $m$ and $d$.

By mathematical induction and similar arguments, it is not difficult to prove that 
\begin{equation}
\begin{aligned}
 \left\| U_1 \right\|_{H^k(\Omega)}\le  C_1 e^{\alpha_{m,k}\tau}\left\| U^0\right\|_{H^k(\Omega)},\quad\forall t\in[0,\tau],
\end{aligned}
\end{equation}
where $C_1$ and $\alpha_{m,k}$ depend on $m$, $d$, and $k$.

\end{proof}

Based on the stability of $\tilde U^{n}$ and the properties of $\mathcal S_{\mathcal N}$, we are ready to prove the global-in-time stability of $U^n$.

\begin{thm}[Global-in-time $H^1$-stability of $U^n$]\label{thm:stb}
    If $ \| U^0 \|_F\leq \sqrt m$  for any $x\in \overline\Omega$ and $\| \nabla U^0\|_F \in L^2(\Omega)$, then the solution of Strang splitting method \eqref{eq:Strang} satisfies 
\begin{equation}
\begin{aligned}
 \left\|\| \nabla U^n \|_F \right\|_{L^2(\Omega)}^2\le  C,\quad \forall n\geq 1,
\end{aligned}
\end{equation}
where $C$ depends only on $\varepsilon$, $\left\| U^0\right\|_{H^1}$, $m$, and $\Omega$.
\end{thm}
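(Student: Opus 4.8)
The plan is to avoid iterating the one-step estimate of Lemma~\ref{lem:Uniform_SN} across all time levels: composing it $n$ times would produce the factor $e^{(1+3m)n\tau}=e^{(1+3m)T}$, which blows up as $T=n\tau\to\infty$. Instead, I would exploit that the intermediate quantity $\tilde U^{n-1}=\mathcal S_{\mathcal L}(\tau/2)U^{n-1}$ is \emph{already} bounded in $H^1$ uniformly in $n$ by Lemma~\ref{lem:stability_Utilde}, and recover $U^n$ from it in a \emph{single} splitting step. Concretely, for every $n\ge 1$ the scheme \eqref{eq:Strang} gives $U^n=\mathcal S_{\mathcal L}(\tau/2)\,\mathcal S_{\mathcal N}(\tau)\,\tilde U^{n-1}$. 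Writing $W\coloneqq \mathcal S_{\mathcal N}(\tau)\tilde U^{n-1}$, the maximum principle (Lemma~\ref{lem:MP}, whose proof shows that both $\mathcal S_{\mathcal L}$ and $\mathcal S_{\mathcal N}$ preserve $\|\cdot\|_F\le\sqrt m$) gives $\|W\|_F\le\sqrt m$ pointwise, hence $\|W\|_{L^2(\Omega)}^2\le m|\Omega|$, while Lemma~\ref{lem:Uniform_SN} applied once to the input $\tilde U^{n-1}$ gives $\bigl\|\,\|\nabla W\|_F\,\bigr\|_{L^2}\le C\,e^{(1+3m)\tau}\,\bigl\|\,\|\nabla\tilde U^{n-1}\|_F\,\bigr\|_{L^2}$.

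Next I would pass to Fourier series and use the parabolic smoothing of the final half-step $\mathcal S_{\mathcal L}(\tau/2)=e^{\tau\varepsilon^2\Delta/2}$, which commutes with $\nabla$ and acts on the $k$-th mode by multiplication by $e^{-\tau\varepsilon^2|k|^2/2}$. By Parseval, $\bigl\|\,\|\nabla U^n\|_F\,\bigr\|_{L^2}^2=c_d\sum_{i,j=1}^m\sum_{0\neq k\in\mathbb Z^d}|k|^2\,e^{-\tau\varepsilon^2|k|^2}\,\bigl|\widehat{W_{ij}}(k)\bigr|^2$. Two elementary bounds on the multiplier are available and complementary: the contraction $|k|^2e^{-\tau\varepsilon^2|k|^2}\le|k|^2$, and the smoothing $|k|^2 e^{-\tau\varepsilon^2|k|^2}\le \tfrac{1}{e\,\tau\varepsilon^2}$, which follows from $\max_{y\ge0}y e^{-y}=e^{-1}$ with $y=\tau\varepsilon^2|k|^2$. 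The first yields $\bigl\|\,\|\nabla U^n\|_F\,\bigr\|_{L^2}\le\bigl\|\,\|\nabla W\|_F\,\bigr\|_{L^2}$; the second yields $\bigl\|\,\|\nabla U^n\|_F\,\bigr\|_{L^2}^2\le\tfrac{1}{e\,\tau\varepsilon^2}\|W\|_{L^2}^2\le\tfrac{m|\Omega|}{e\,\tau\varepsilon^2}$.

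I would then finish by a dichotomy on the step size. For $\tau\le 1$, the contraction route combined with Lemma~\ref{lem:Uniform_SN} and the uniform bound $\bigl\|\,\|\nabla\tilde U^{n-1}\|_F\,\bigr\|_{L^2}^2\le C$ of Lemma~\ref{lem:stability_Utilde} gives $\bigl\|\,\|\nabla U^n\|_F\,\bigr\|_{L^2}^2\le C^2 e^{2(1+3m)}C$, a constant free of $\tau$ since the Gr\"onwall exponent is frozen at its value on $[0,1]$. For $\tau>1$, the smoothing route gives $\bigl\|\,\|\nabla U^n\|_F\,\bigr\|_{L^2}^2\le \tfrac{m|\Omega|}{e\,\varepsilon^2}$, again free of $\tau$. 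Taking the larger of the two constants produces a bound depending only on $\varepsilon$, $\|U^0\|_{H^1}$, $m$, and $\Omega$, uniformly in $n$. The main obstacle is exactly this $\tau$-dependence: the crude factor $e^{(1+3m)\tau}$ is harmless for bounded steps but must be defeated for large steps, and the decisive observation is that large $\tau$ is precisely the regime in which the heat half-step $\mathcal S_{\mathcal L}(\tau/2)$ supplies strong smoothing, so the two regimes are controlled by genuinely different mechanisms rather than by a single estimate.
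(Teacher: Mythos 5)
Your proposal is correct and follows essentially the same route as the paper's own proof: the same single-step recovery $U^n=\mathcal S_{\mathcal L}(\tau/2)\,\mathcal S_{\mathcal N}(\tau)\,\tilde U^{n-1}$ from the uniformly $H^1$-bounded intermediate solution of Lemma~\ref{lem:stability_Utilde}, and the same dichotomy between $\tau\le 1$ (Lemma~\ref{lem:Uniform_SN} plus $L^2$-contractivity of the heat half-step) and $\tau>1$ (parabolic smoothing of $\mathcal S_{\mathcal L}(\tau/2)$ applied to the pointwise bound $\|W\|_F\le\sqrt m$ from the maximum principle). The only difference is cosmetic: you make explicit the Fourier multiplier estimate $|k|^2e^{-\tau\varepsilon^2|k|^2}\le \frac{1}{e\,\tau\varepsilon^2}$ that the paper leaves implicit in its constant $C$ for the large-step case.
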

\begin{proof}
Note that for any $n\geq 1$,
\begin{equation}
    U^n = \mathcal S_{\mathcal L}(\tau/2) \mathcal S_{\mathcal N}(\tau)\tilde U^{n-1}.
\end{equation}
    According to Lemma \ref{lem:stability_Utilde},
    \begin{equation}
\begin{aligned}
 \left\|\| \nabla \tilde U^{n-1} \|_F \right\|_{L^2(\Omega)}^2\le  C,\quad \forall n\geq 1,
\end{aligned}
\end{equation}
where $C$ depends on $\varepsilon$, $\left\| U^0\right\|_{H^1}$, $m$, and $\Omega$.
If $0<\tau\leq 1$, according to Lemma \ref{lem:Uniform_SN} and the fact that $S_{\mathcal L}(\tau/2)$ is contractive in $L^2$-norm, we then have
\begin{equation}
\begin{aligned}
 \left\|\| \nabla U^n \|_F \right\|_{L^2(\Omega)}^2\le \left\|\| \nabla (\mathcal S_{\mathcal N}(\tau)\tilde U^{n-1}) \|_F \right\|_{L^2(\Omega)}^2 \leq C e^{2(1+3m) \tau}  \left\|\| \nabla \tilde U^n\|_F\right\|_{L^2(\Omega)}^2\leq C.
\end{aligned}
\end{equation}
If $\tau >1$, then we have 
\begin{equation}
\begin{aligned}
 \left\|\| \nabla U^n \|_F \right\|_{L^2(\Omega)}^2\le C \left\|\mathcal S_{\mathcal N}(\tau)\tilde U^{n-1} \right\|_{L^2(\Omega)}^2 \leq C,
\end{aligned}
\end{equation}
because $\|\mathcal S_{\mathcal N}(\tau)\tilde U^{n-1} \|^2_F\leq \|\tilde U^{n-1} \|^2_F\leq m$ holds.
\end{proof}

\subsection{Uniform bound of determinant}
We now state and prove a new result on the bound of determinant of matrix-field. 
\begin{lem}\label{lem:ineqa}
    For any sequence $\{a_i\in \mathbb R\}_{i=1}^m$ satisfying
    \begin{equation}
        \sum_{i=1}^m a_i^2 \leq a,
    \end{equation}
    the following inequality holds 
    \begin{equation}\label{ineq:a}
        \prod_{i=1}^m |a_i| \leq \left(\frac a m\right)^{\frac m 2}.
    \end{equation}
\end{lem}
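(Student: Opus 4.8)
The plan is to reduce this purely algebraic inequality to the classical arithmetic mean--geometric mean (AM--GM) inequality, applied not to the $|a_i|$ directly but to the nonnegative quantities $a_i^2$. Since the hypothesis controls $\sum_{i=1}^m a_i^2$ rather than $\sum_{i=1}^m |a_i|$, it is most natural to phrase everything in terms of squares and only take square roots at the very end.

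Concretely, I would carry out three steps. First, apply AM--GM to the $m$ nonnegative numbers $a_1^2, a_2^2, \ldots, a_m^2$ to obtain
\begin{equation}
\left(\prod_{i=1}^m a_i^2\right)^{\frac1m} \leq \frac1m \sum_{i=1}^m a_i^2.
\end{equation}
Second, invoke the hypothesis $\sum_{i=1}^m a_i^2 \leq a$ to bound the right-hand side, giving
\begin{equation}
\left(\prod_{i=1}^m a_i^2\right)^{\frac1m} \leq \frac{a}{m}.
\end{equation}
Third, raise both sides to the power $\frac{m}{2}$ (a monotone operation on nonnegative reals) and use the identity $\left(\prod_{i=1}^m a_i^2\right)^{\frac12} = \prod_{i=1}^m |a_i|$ to conclude
\begin{equation}
\prod_{i=1}^m |a_i| \leq \left(\frac{a}{m}\right)^{\frac{m}{2}},
\end{equation}
which is exactly \eqref{ineq:a}.

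Honestly, there is no serious obstacle here: the statement is a one-line consequence of AM--GM once one decides to work with $a_i^2$. The only points worth a moment's care are bookkeeping ones. The hypothesis gives an inequality $\sum a_i^2 \leq a$ rather than equality, but this slack only strengthens the bound, so it causes no difficulty. The degenerate case in which some $a_i = 0$ makes the left-hand side vanish and the inequality trivially holds. Thus the whole argument is the three displayed lines above, and the main ``work'' is simply recognizing that squaring the entries converts the given constraint into precisely the form AM--GM requires.
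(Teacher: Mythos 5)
Your proof is correct, but it takes a genuinely different route from the paper. You reduce the statement to the classical AM--GM inequality applied to the squares $a_1^2,\ldots,a_m^2$, which makes the whole lemma a three-line consequence of a standard named result. The paper instead proves the inequality from scratch by induction on $m$: at the inductive step it bounds $\prod_{i=1}^{m_0-1}|a_i|$ using the induction hypothesis with budget $a - a_{m_0}^2$, and then maximizes the one-variable function $h(s) = \left(\frac{a-s}{m_0-1}\right)^{m_0-1} s$ over $s \in [0,a]$ by calculus, finding the critical point $s = a/m_0$. In effect, the paper re-derives the special case of AM--GM it needs, so its argument is self-contained but longer; your argument is shorter and more transparent, at the cost of invoking AM--GM as a black box (which is entirely standard and unobjectionable). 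Both proofs are complete and rigorous, and your bookkeeping remarks (the slack in $\sum a_i^2 \leq a$ only helps; the case of some $a_i=0$ is trivial) are accurate.
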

\begin{proof}
    We prove this lemma by induction.
    When $m=1$, the proof is trivial. 
    Assume that the result holds for $m = m_0-1$. Consider the case $m=m_0$. Since
    \begin{equation}
        \sum_{i=1}^{m_0-1} a_i^2 \leq a-a_{m_0}^2,
    \end{equation}
    we obtain from the assumption that
    \begin{equation}
        \prod_{i=1}^{m_0-1} |a_i| \leq \left(\frac {a-a_{m_0}^2}{m_0-1}\right)^{\frac {m_0-1} 2}.
    \end{equation}
    As a consequence,
    \begin{equation}
         \prod_{i=1}^{m_0} |a_i| \leq \left(\frac {a-a_{m_0}^2}{m_0-1}\right)^{\frac {m_0-1} 2} |a_{m_0}| = \left(\left(\frac {a-a_{m_0}^2}{m_0-1}\right)^{ m_0-1} a_{m_0}^2\right)^{\frac12}.
    \end{equation}
    Consider the function
    \begin{equation}
        h(s) = \left(\frac {a-s}{m_0-1}\right)^{ m_0-1} s, \quad s\in [0,a].
    \end{equation}
    Letting
    \begin{equation}
        h'(s) = \left(\frac {a-s}{m_0-1}\right)^{ m_0-2}\left[ \frac {a-s}{m_0-1}- s \right] = 0,
    \end{equation}
    we have $s ={a}/{m_0}$. This implies that 
    \begin{equation}
        h(s)\leq h\left(\frac a{m_0}\right)= \left(\frac a{m_0}\right)^{m_0}
    \end{equation}
    and then
    \begin{equation}
         \prod_{i=1}^{m_0} |a_i| \leq \left(\frac a{m_0}\right)^{\frac{m_0}2}.
    \end{equation}
    Therefore, the inequality \eqref{ineq:a} holds for all $m\geq 1$.
\end{proof}
\begin{thm}[Uniform bound of determinant of $U^n$]\label{thm:det}
    If $ \| U^0 \|_F\leq \sqrt m$  for any $x\in \overline\Omega$, then the exact solution $U(t)$ to MAC equation \eqref{eq:matrixAC} and the numerical solution $U^n$ of Strang splitting method \eqref{eq:Strang} satisfy
\begin{equation}
\begin{aligned}
 \left|\det(U)\right| \leq 1\quad\mbox{and}\quad \left|\det(U^n)\right|\leq 1.
\end{aligned}
\end{equation}
\end{thm}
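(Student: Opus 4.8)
The plan is to reduce both determinant bounds to the elementary inequality of Lemma \ref{lem:ineqa} applied to singular values. The starting point is the identity $|\det(U)| = \prod_{i=1}^m \sigma_i$, where $\sigma_1,\dots,\sigma_m \geq 0$ are the singular values of $U$ (the orthogonal factors in the SVD have determinant $\pm 1$), combined with $\|U\|_F^2 = \sum_{i=1}^m \sigma_i^2$. Hence, whenever the pointwise bound $\|U\|_F^2 \leq m$ holds, taking $a_i = \sigma_i$ and $a = m$ in Lemma \ref{lem:ineqa} gives $\prod_{i=1}^m \sigma_i \leq (m/m)^{m/2} = 1$, i.e. $|\det(U)| \leq 1$. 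Everything therefore hinges on establishing the pointwise Frobenius bound $\|U\|_F \leq \sqrt m$ for each of the two solutions, after which the determinant estimate is purely algebraic.

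For the numerical solution this is immediate: under the hypothesis $\|U^0\|_F \leq \sqrt m$, the maximum principle of Lemma \ref{lem:MP} gives $\|U^n\|_F \leq \sqrt m$ for all $n \geq 1$, and the reduction above then yields $|\det(U^n)| \leq 1$ directly.

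For the exact solution I would prove the analogous maximum bound principle $\|U(t,x)\|_F \leq \sqrt m$ straight from \eqref{eq:matrixAC}. Introduce $\phi = \|U\|_F^2 = \mathrm{Tr}(UU^\top)$ and differentiate along the flow; using $\langle U, \Delta U\rangle_F = \tfrac12 \Delta \phi - \|\nabla U\|_F^2$ entrywise and $\langle U, UU^\top U\rangle_F = \mathrm{Tr}(UU^\top U U^\top) = \sum_i \sigma_i^4$, one obtains
\begin{equation}
\partial_t \phi = \varepsilon^2 \Delta \phi - 2\varepsilon^2 \|\nabla U\|_F^2 + 2\Big(\sum_{i=1}^m \sigma_i^2 - \sum_{i=1}^m \sigma_i^4\Big).
\end{equation}
By Cauchy--Schwarz, $\sum_i \sigma_i^4 \geq (\sum_i \sigma_i^2)^2/m = \phi^2/m$, so dropping the nonpositive gradient term shows that $\phi$ is a subsolution of the scalar logistic reaction-diffusion equation $\partial_t \psi = \varepsilon^2 \Delta \psi + 2\psi(1 - \psi/m)$, for which the constant $\psi \equiv m$ is a stationary supersolution. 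A comparison argument then propagates $\phi(t,x) \leq m$ from the initial datum $\phi(0,x) \leq m$, giving $\|U(t,x)\|_F \leq \sqrt m$, and Lemma \ref{lem:ineqa} again delivers $|\det(U)| \leq 1$.

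The routine parts are the SVD identities and the invocation of Lemma \ref{lem:ineqa}; the only genuinely analytic step is the maximum-principle propagation of $\phi \leq m$ for the PDE, where one must justify the comparison rigorously (either via the subsolution/supersolution framework above or, on the periodic torus, by examining the spatial maximum of $\phi$). For the numerical solution no such analysis is required, since Lemma \ref{lem:MP} already supplies the needed Frobenius bound.
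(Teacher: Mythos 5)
Your proposal is correct and follows essentially the same route as the paper: both reduce the determinant bound to Lemma~\ref{lem:ineqa} applied to the singular values, via $\left|\det(U)\right| = \prod_{i=1}^m \sigma_i$ and $\sum_{i=1}^m \sigma_i^2 = \|U\|_F^2 \leq m$, and both handle the numerical solution by citing the maximum principle of Lemma~\ref{lem:MP}. The only place you go beyond the paper is the exact solution: the paper simply asserts the pointwise bound $\|U\|_F^2 \leq m$ as a known fact (the maximum bound principle mentioned in the introduction), whereas you actually prove it with the subsolution/comparison argument for $\phi = \|U\|_F^2$ --- and your computation there is sound (the identity $\langle U, \Delta U\rangle_F = \tfrac12\Delta\phi - \|\nabla U\|_F^2$, the trace identity $\langle U, UU^\top U\rangle_F = \sum_i \sigma_i^4$, and the Cauchy--Schwarz bound $\sum_i \sigma_i^4 \geq \phi^2/m$ are all correct), so your write-up is self-contained where the paper's is not.
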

\begin{proof}
        Suppose that $U(t,x)$ has a SVD decomposition $U = P \Sigma Q^\top $ with orthogonal matrices $P$ and $Q$ and $\Sigma = \diag\{\sigma_1,\ldots,\sigma_m\}$. 
    Due to the fact $\|U\|_F^2 \leq  m$, we have 
    \begin{equation}
        \sum_{i =1}^m \sigma_i^2 \leq m.
    \end{equation}
    According to Lemma \ref{lem:ineqa}, we have
\begin{equation}
    \left|\det(U)\right| = \prod_{i=1}^m \left|\sigma_i\right|\leq 1.
\end{equation}
Similarly, we have $\left|\det(U^n)\right|\leq 1$.
\end{proof}

\subsection{Convergence analysis}
\begin{lem}\label{lem:Uniform_MAC}
If $ \| U^0 \|_F\leq \sqrt m$  for any $x\in \overline\Omega$ and $\| \nabla U^0\|_F \in L^2(\Omega)$ and $U^0 \in H^k(\Omega)$ with $k\geq 1$, then for any finite time $T>0$, the solution $U(t,x)$ to the MAC equation \eqref{eq:matrixAC} satisfies
\begin{equation}
\begin{aligned}
 \left\| U \right\|_{H^k(\Omega)}\le  C_2,\quad\forall t\in[0,T],
\end{aligned}
\end{equation}
where $C_2$ depends on $\left\| U^0\right\|_{H^k}$, $T$,  $m$, $k$, $d$, and $\Omega$.
\end{lem}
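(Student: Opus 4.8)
The plan is to derive the bound by spatial energy estimates closed with Gronwall's inequality, where the decisive ingredient is the continuous maximum bound principle for the MAC equation, namely $\|U(t,x)\|_F \le \sqrt m$ for all $t\ge 0$ and $x\in\overline\Omega$. This holds because the exact flow of \eqref{eq:matrixAC} preserves the Frobenius ball of radius $\sqrt m$ (the same mechanism as for the split pieces in Lemma \ref{lem:MP}, and it is already invoked in the proof of Theorem \ref{thm:det}). Its role is that every $L^\infty$ factor of the cubic nonlinearity $UU^\top U$ stays bounded by $\sqrt m$, while the diffusion term $\varepsilon^2\Delta U$ contributes with a favorable sign and may simply be discarded. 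I would treat $U$ as the smooth local-in-time solution provided by parabolic regularity; the point of the lemma is exactly that its $H^k$ norm cannot blow up on $[0,T]$.

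For the base case $k=1$ I would differentiate \eqref{eq:matrixAC} in a direction $x_\ell$ and set $q=\partial_{x_\ell}U$, which satisfies $\partial_t q = \varepsilon^2\Delta q + q - (qU^\top U + Uq^\top U + UU^\top q)$, i.e. the equation of Lemma \ref{lem:Uniform_SN} with an extra diffusion term. Taking $\mathrm{Tr}(\,\cdot\,q^\top)$, integrating over $\Omega$, and integrating by parts in the diffusion term (periodicity) gives $\frac12\frac{d}{dt}\int_\Omega\|q\|_F^2 = -\varepsilon^2\int_\Omega\|\nabla q\|_F^2 + \int_\Omega\|q\|_F^2 - \int_\Omega\mathrm{Tr}[(qU^\top U + Uq^\top U + UU^\top q)q^\top]$. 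Discarding the nonpositive diffusion term and bounding the cubic term pointwise by $3\|U\|_F^2\|q\|_F^2\le 3m\|q\|_F^2$ via MBP yields $\frac12\frac{d}{dt}\int_\Omega\|q\|_F^2\le(1+3m)\int_\Omega\|q\|_F^2$. Summing over $\ell$ and applying Gronwall on $[0,T]$ produces $\|\nabla U(t)\|_{L^2}\le e^{(1+3m)T}\|\nabla U^0\|_{L^2}$, which combined with the MBP bound $\|U\|_{L^2}^2\le m|\Omega|$ controls $\|U\|_{H^1}$.

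For general $k$ I would argue by induction, running the analogous estimate for each multi-index $\alpha$ with $|\alpha|\le k$: testing $D^\alpha$ of \eqref{eq:matrixAC} against $D^\alpha U$, discarding the diffusion contribution $-\varepsilon^2\|\nabla D^\alpha U\|_{L^2}^2$, and summing gives $\frac12\frac{d}{dt}\|U\|_{H^k}^2\le \|U\|_{H^k}^2 + \|UU^\top U\|_{H^k}\|U\|_{H^k}$. The one nonroutine estimate is $\|UU^\top U\|_{H^k}\le C_m\|U\|_{H^k}$, which I would obtain from the Moser-type (Gagliardo--Nirenberg) product inequality $\|fg\|_{H^k}\lesssim \|f\|_{L^\infty}\|g\|_{H^k}+\|f\|_{H^k}\|g\|_{L^\infty}$ applied twice to the triple product: each resulting term carries exactly one full $H^k$ factor, while the two remaining $L^\infty$ factors are bounded by $\sqrt m$ through the MBP. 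Gronwall on $[0,T]$ then closes the estimate and yields the claimed constant $C_2$ depending on $\|U^0\|_{H^k}$, $T$, $m$, $k$, $d$, and $\Omega$.

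The main obstacle is precisely this top-order control of the cubic nonlinearity: one must ensure that $UU^\top U$ is estimated linearly in $\|U\|_{H^k}$, with all the potentially dangerous factors absorbed into the MBP $L^\infty$ bound rather than into higher Sobolev norms not yet known to be finite. For $k=1$ the pointwise MBP bound does this directly and no product estimate is needed; for $k\ge 2$ the Moser inequality is exactly what guarantees that the top-order derivative appears in only one factor at a time, so that the energy inequality closes and Gronwall applies.
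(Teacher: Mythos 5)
Your proposal is correct and follows essentially the same route as the paper: the paper omits this proof, stating only that it is ``similar to the one of Lemma~\ref{lem:Uniform_SN},'' and your argument is precisely that lemma's differentiate-test-Gronwall scheme carried over to the PDE, with the diffusion term discarded by its favorable sign after integration by parts and the cubic term controlled pointwise by the maximum bound principle. If anything, you supply more detail than the paper, in particular the Moser-type product estimate $\|UU^\top U\|_{H^k}\lesssim \|U\|_{L^\infty}^2\|U\|_{H^k}$ that makes the induction for $k\ge 2$ rigorous, where the paper (in Lemma~\ref{lem:Uniform_SN}) only appeals to ``mathematical induction and similar arguments.''
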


\begin{proof}
    The proof is similar to the one of Lemma \ref{lem:Uniform_SN} and is omitted here. 
\end{proof}

\begin{lem}[Dependency of $\mathcal S_{\mathcal N}$ on initial condition]\label{lem:dependV1V2}
    If $V_i\in \mathbb R^{m\times m}$ satisfy $\|V_i\|_F\leq \sqrt m$ for $i=1,2$, then
\begin{equation}
\|\mathcal S_{\mathcal N}(\tau) V_1-\mathcal S_{\mathcal N}(\tau) V_2\|_{L^2(\Omega)}\le  e^{(1+3m)\tau} \|V_1-V_2\|_{L^2(\Omega)}.
\end{equation}
\end{lem}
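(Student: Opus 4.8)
The plan is to reduce the $L^2$ estimate to a pointwise-in-$x$ Gr\"onwall argument for the difference of two ODE trajectories, mirroring the energy estimate already used in the proof of Lemma \ref{lem:Uniform_SN}. Fix $x\in\Omega$ and write $U_i(t) = \mathcal S_{\mathcal N}(t) V_i$ for the solutions of \eqref{eq:nlODE} with data $V_i$, $i=1,2$. Since $\|V_i\|_F \leq \sqrt m$, the maximum principle (Lemma \ref{lem:MP}) guarantees $\|U_i(t)\|_F \leq \sqrt m$ for all $t\in[0,\tau]$, a bound I will use repeatedly.

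First I would set $W = U_1 - U_2$ and derive its evolution. Subtracting the two copies of \eqref{eq:nlODE} gives $\partial_t W = W - (U_1 U_1^\top U_1 - U_2 U_2^\top U_2)$, and I would rewrite the cubic difference by the telescoping identity
\[
U_1 U_1^\top U_1 - U_2 U_2^\top U_2 = W U_1^\top U_1 + U_2 W^\top U_1 + U_2 U_2^\top W,
\]
which isolates a single factor $W$ in each summand. Next I would take the Frobenius inner product of the equation for $W$ with $W$ itself, obtaining
\[
\tfrac12 \partial_t \|W\|_F^2 = \|W\|_F^2 - \left\langle W U_1^\top U_1 + U_2 W^\top U_1 + U_2 U_2^\top W,\; W\right\rangle_F .
\]

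Each of the three cubic terms is then controlled by Cauchy--Schwarz together with the submultiplicativity bounds $\|AB\|_F \leq \|A\|_2 \|B\|_F$ and $\|AB\|_F \leq \|A\|_F \|B\|_2$, using $\|U_i\|_2 \leq \|U_i\|_F \leq \sqrt m$; a short computation shows each term contributes at most $m\|W\|_F^2$, for a total of $3m\|W\|_F^2$. This yields $\tfrac12\partial_t \|W\|_F^2 \leq (1+3m)\|W\|_F^2$, and Gr\"onwall's inequality gives the pointwise bound $\|W(\tau,x)\|_F \leq e^{(1+3m)\tau}\|W(0,x)\|_F$. Finally, squaring and integrating over $\Omega$ converts this into the claimed $L^2$ estimate, since $\mathcal S_{\mathcal N}$ acts independently at each spatial point.

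The only delicate step is the clean treatment of the cubic difference: the telescoping identity must be verified term by term, and the norm bookkeeping must place the spectral-norm factor $\|U_i\|_2$ (rather than $\|U_i\|_F$) on the \emph{multiplied} factors, so that each term is bounded by exactly $m\|W\|_F^2$ and the constant $1+3m$ matches the exponent asserted in the statement. Everything else is routine.
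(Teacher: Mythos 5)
Your proposal is correct and follows essentially the same route as the paper's proof: reduce to a pointwise-in-$x$ ODE estimate, telescope the cubic difference $U_1U_1^\top U_1 - U_2U_2^\top U_2$ into three terms each containing one factor of $W$, bound each term by $m\|W\|_F^2$ using $\|U_i\|_F\le\sqrt m$, and conclude by Gr\"onwall and integration over $\Omega$. The only cosmetic differences are the arrangement of the telescoping terms and your use of spectral-norm bookkeeping where the paper uses Frobenius submultiplicativity; both give the same constant $1+3m$.
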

\begin{proof}
Consider 
\begin{align}
\begin{cases}
\partial_t U_1 = U_1-U_1U_1^\top U_1, \\
U_1(0) = V_1
\end{cases}
\end{align}
and
\begin{align}
\begin{cases}
\partial_t U_2 = U_2-U_2U_2^\top U_2, \\
U_2(0) = V_2.
\end{cases}
\end{align}
It is easy to check that $\|U_i\|_F\leq \sqrt m$, $i=1,2$.
Let $U = U_1-U_2 = \mathcal S_{\mathcal N}(t) V_1-\mathcal S_{\mathcal N}(t) V_2$.
Then we have
\begin{align}\label{eq:diffV1V2}
\begin{cases}
\partial_t U =U-(U_1U_1^\top U_1-U_2U_2^\top U_2), \\
U(0) = V_1-V_2.
\end{cases}
\end{align}
By taking $\mbox{Tr}(~\cdot ~U^\top )$ on both sides, we have
\begin{equation}
    \begin{aligned}
    \frac12 \partial_t \| U\|_F^2 & =  \|  U\|_F^2 -\mbox{Tr}\left((U_1U_1^\top U_1-U_2U_2^\top U_2)U^\top \right)\\
    & =  \|  U\|_F^2 -\mbox{Tr}\left((U_1U_1^\top U+U_1U^\top U_2+UU_2^\top U_2)U^\top \right)\\
    & \leq (1+ 3 m) \| U\|_F^2,
    \end{aligned}
\end{equation}
which yields that 
\begin{equation}
    \| U\|_F^2 \leq e^{2(1+3m) \tau} \|V_1-V_2\|_F^2, \quad \forall t\in[0,\tau].
\end{equation}
The proof is completed.
\end{proof}

\begin{thm}[Second-order convergence in time]\label{thm:conv}
Assume the initial data $U^0$ is sufficiently smooth and $ \| U^0 \|_F\leq \sqrt m$  for any $x\in \overline\Omega$.  Let $U(t)$ be the exact solution to \eqref{eq:matrixAC}  with initial data $U^0$. 
The numerical solution $U^n$ of Strang splitting method \eqref{eq:Strang} has second-order error estimate:
\begin{align} \label{Eu_1.23}
\sup_{n\ge 1, n\tau \le T}  \| U^n - U(t_n, \cdot ) \|_{L^2(\Omega)}
\le C  \tau^2,
\end{align}
where $t_n = n\tau$ and $C>0$ depends on ($\eps$, $U^0$, $T$).
\end{thm}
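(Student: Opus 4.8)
The plan is to follow the classical two-ingredient recipe for one-step time integrators: a \emph{local} consistency estimate of order $\tau^3$ for a single step, combined with a \emph{discrete stability} (Lipschitz) estimate that lets local errors accumulate only linearly over the $O(T/\tau)$ steps, thereby yielding a global error of order $\tau^2$. Throughout I write $\mathcal S(\tau)$ for the exact solution operator of \eqref{eq:matrixAC}, so that $U(t_{n+1},\cdot) = \mathcal S(\tau) U(t_n,\cdot)$, and $\mathcal T(\tau) = \mathcal S_{\mathcal L}(\tau/2)\,\mathcal S_{\mathcal N}(\tau)\,\mathcal S_{\mathcal L}(\tau/2)$ for the numerical one-step map, so that $U^{n+1} = \mathcal T(\tau) U^n$. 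I would decompose the global error $e^n \coloneqq U^n - U(t_n,\cdot)$ as
\[
e^{n+1} = \bigl[\mathcal T(\tau) U^n - \mathcal T(\tau) U(t_n,\cdot)\bigr] - \bigl[\mathcal S(\tau) U(t_n,\cdot) - \mathcal T(\tau) U(t_n,\cdot)\bigr],
\]
which separates the propagated error from the local truncation error $\rho^{n+1} \coloneqq \mathcal S(\tau) U(t_n,\cdot) - \mathcal T(\tau) U(t_n,\cdot)$.

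For the stability factor, I would estimate $\|\mathcal T(\tau) U^n - \mathcal T(\tau) U(t_n,\cdot)\|_{L^2(\Omega)}$ by combining the $L^2$-contractivity of the heat propagator $\mathcal S_{\mathcal L}(\tau/2) = e^{(\tau/2)\varepsilon^2\Delta}$ with the Lipschitz estimate $\|\mathcal S_{\mathcal N}(\tau) V_1 - \mathcal S_{\mathcal N}(\tau) V_2\|_{L^2}\le e^{(1+3m)\tau}\|V_1 - V_2\|_{L^2}$ of Lemma \ref{lem:dependV1V2}. Here one must verify the hypothesis $\|\cdot\|_F \le \sqrt m$ for both inputs fed into $\mathcal S_{\mathcal N}$: the maximum principle (Lemma \ref{lem:MP}) together with the fact that $\mathcal S_{\mathcal L}$ preserves the Frobenius bound gives $\|\mathcal S_{\mathcal L}(\tau/2)U^n\|_F \le \sqrt m$, and the continuous maximum principle gives the same for $\mathcal S_{\mathcal L}(\tau/2)U(t_n,\cdot)$. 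Peeling off the two contractive linear factors then yields the one-step stability bound $\|\mathcal T(\tau) U^n - \mathcal T(\tau) U(t_n,\cdot)\|_{L^2} \le e^{(1+3m)\tau}\|e^n\|_{L^2}$.

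The crux is the local truncation estimate $\|\rho^{n+1}\|_{L^2(\Omega)} \le C\tau^3$. Because both sub-flows are solved \emph{exactly}, there is no quadrature error and the only contribution is the splitting (commutator) error; the symmetric Strang structure forces the $\tau^1$ and $\tau^2$ contributions of $\mathcal T(\tau)$ to coincide with those of $\mathcal S(\tau)$. Concretely, I would Taylor-expand $\mathcal T(\tau) U(t_n,\cdot)$ and $\mathcal S(\tau) U(t_n,\cdot)$ in powers of $\tau$ using a Lie-series (BCH-type) formalism with generators $\mathcal L U = \varepsilon^2\Delta U$ and $\mathcal N(U) = U - UU^\top U$, check the cancellation of the leading terms, and bound the remainder, which is a combination of iterated brackets such as $[\mathcal L,[\mathcal L,\mathcal N]]$ and $[\mathcal N,[\mathcal N,\mathcal L]]$ evaluated along the flow. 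Since these brackets bring in spatial derivatives up to fourth order through the two copies of $\Delta$, the $L^2$ remainder is controlled by a higher Sobolev norm (roughly $H^4$) of the exact solution, which is finite and uniform on $[0,T]$ by the regularity bound of Lemma \ref{lem:Uniform_MAC} (with the analogous $H^k$-propagation for $\mathcal S_{\mathcal N}$ from Lemma \ref{lem:Uniform_SN}) under the smoothness assumption on $U^0$.

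Finally I would close with a discrete Grönwall argument: the two estimates give $\|e^{n+1}\|_{L^2} \le e^{(1+3m)\tau}\|e^n\|_{L^2} + C\tau^3$ with $e^0 = 0$, so summing the geometric series over $n$ steps with $n\tau \le T$ produces $\|e^n\|_{L^2} \le C\tau^3 \cdot \frac{e^{(1+3m)T} - 1}{e^{(1+3m)\tau} - 1} \le C'\tau^2$, uniformly in $n$. I expect the main obstacle to be the local truncation step: rigorously carrying out the commutator expansion in the nonlinear PDE setting and pinning down exactly which Sobolev norm of the exact solution bounds the third-order remainder, that is, making the ``sufficiently smooth'' hypothesis precise so that Lemma \ref{lem:Uniform_MAC} delivers the required uniform higher-order bound.
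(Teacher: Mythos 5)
Your proposal is correct and follows essentially the same route as the paper's proof: the same consistency-plus-stability decomposition of the error, the same one-step Lipschitz bound obtained from Lemma \ref{lem:dependV1V2} together with $L^2$-contractivity of $\mathcal S_{\mathcal L}$ (with the Frobenius bound $\sqrt m$ supplied by the maximum principle, which you verify more explicitly than the paper does), the regularity input from Lemma \ref{lem:Uniform_MAC} to control the $O(\tau^3)$ local error, and the same discrete Gr\"onwall summation yielding a bound of the form $B_1 T e^{(1+3m)T}\tau^2$. The only cosmetic difference is in the local truncation step, where the paper resolves your anticipated ``main obstacle'' by direct computation—explicit Taylor expansion of the three substeps and a Duhamel (variation-of-constants) expansion of the exact flow, matching terms through $\tau^2$—rather than a BCH/commutator formalism, and it swaps your naming of $\mathcal S$ and $\mathcal T$.
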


\begin{proof}
Denote by
\begin{equation}
\mathcal S(\tau) = \mathcal S_{\mathcal L}({\tau}/2)\mathcal S_{\mathcal N} (\tau) 
\mathcal S_{\mathcal L}({\tau}/2).
\end{equation} 
We first analyze the local truncation error for the propagator $\mathcal S(\tau)$. 
Let $\mathcal L = \varepsilon^2\Delta$ and assume that $V$ is a sufficiently regular initial condition.
Straight computation gives 
\begin{equation}
\begin{aligned}
&B= \mathcal S_{\mathcal L}({\tau}/2) V = V + \frac{\tau}2 \mathcal L V
+ \frac{\tau^2} 8 \mathcal L^2 V +\mathcal O (\tau^3),\\
&W = \mathcal S_{\mathcal N}(\tau) B = B + \tau P + \frac 12 \tau^2 (P - PB^\top B-BP^\top B-BB^\top P ) +  \mathcal O(\tau^3),\\
& \mathcal S_{\mathcal L}({\tau}/2) W = W + \frac{\tau}2 \mathcal L W
+ \frac{\tau^2} 8 \mathcal L^2 W +\mathcal O (\tau^3),
\end{aligned}
\end{equation}
where $P=B-BB^\top B$ and the constant coefficient of $\mathcal O(\tau^3)$ depends on the regularity of $V$.
Then we have 
\begin{equation}
\begin{aligned}
 \mathcal S(\tau)  V 
= & \mathcal S_{\mathcal L}(\tau/2) \mathcal S_{\mathcal N}(\tau) \left[V + \frac{\tau}2 \mathcal L V
+ \frac{\tau^2} 8 \mathcal L^2 V +\mathcal O (\tau^3) \right] \\
= &  \mathcal S_{\mathcal L}( {\tau}/2) \Bigl[ V+\tau\left(\frac12\mathcal L+1-VV^\top \right)V + \frac12\tau^2\Bigl(\frac14\mathcal L^2+\mathcal L \\
&\quad +(1-4VV^\top +3VV^{\top}VV^{\top}) -(\mathcal LV)V^{\top} - V(\mathcal L V)^{\top}-VV^{\top}\mathcal L\Bigr)V + \mathcal O(\tau^3)\Bigr] \\
=& V +\tau(\mathcal L+1-VV^{\top}) V + \frac12\tau^2 \Bigl(\mathcal L^2 V+2 \mathcal L V +(1-4VV^{\top}+3VV^{\top}VV^{\top})V \\
&\quad -(\mathcal LV)V^{\top}V - V(\mathcal L V)^{\top}V-VV^{\top}(\mathcal L V) -\mathcal L(VV^{\top}V) \Bigr) + \mathcal O(\tau^3). \label{Eu_2.32}
\end{aligned}
\end{equation}
We now turn to the expansion  of the exact PDE solution. 
Let $\mathcal T(\tau)$ be the exact solution operator at $t=\tau$ to \eqref{eq:matrixAC} with initial data $V$. We then have
\begin{equation}
\begin{aligned}\label{Eu_2.33}
\mathcal T(\tau) V & = \mathcal S_{\mathcal L}(\tau) V + \int_0^{\tau} 
\mathcal S_{\mathcal  L}(\tau -s) \left[U(s)-U(s)U^{\top}(s) U(s)\right] ds  \\
& =\mathcal S_{\mathcal L}(\tau)V + \int_0^{\tau} ( 1+ (\tau-s) \mathcal L) \left[U(s)-U(s)U^{\top}(s) U(s)\right] ds + \mathcal O(\tau^3)  \\
& = \mathcal S_{\mathcal L}(\tau)V+ \int_0^{\tau} 
\Bigl(  V -VV^{\top} V + s( Q-QV^{\top}V-VQ^{\top}V-VV^{\top}Q)   
\Bigr) ds \\
&\quad + 
\int_0^{\tau} (\tau-s)\mathcal L (V-VV^{\top}V) ds + \mathcal O(\tau^3)  \\
& = \mathcal S_{\mathcal L}(\tau)V+ \tau (V -VV^{\top} V )
+ \frac {\tau^2}2 \Bigl(  \mathcal L (V-VV^{\top}V)\\
&\quad +(Q-QV^{\top}V-VQ^{\top}V-VV^{\top}Q) \Bigr)+ \mathcal O(\tau^3) \\
& = V +\tau (\mathcal L V+ V -VV^{\top} V )
+ \frac {\tau^2}2 \Bigl( \mathcal L^2 V+ \mathcal L (2V-VV^{\top}V)\\
&\quad +(1-4VV^{\top}+3VV^{\top}VV^{\top})V  -(\mathcal LV)V^{\top}V - V(\mathcal L V)^{\top}V-VV^{\top}(\mathcal L V) \Bigr)+ \mathcal O(\tau^3), \\
\end{aligned}
\end{equation}
where $Q = \mathcal L V +V -VV^{\top} V$.
Clearly \eqref{Eu_2.32} and \eqref{Eu_2.33} have the same form in  $\mathcal O(\tau^3)$.

Since the initial data $u^0$ is sufficiently smooth, according to Lemma \ref{lem:Uniform_MAC}, we have 
\begin{equation}\label{J18_22a}
\| U(t_n) \|_{H^{k} (\Omega) }\le C,
\end{equation} 
where $C$ depends on $\left\| U^0\right\|_{H^k}$, $T$,  $m$, $k$, $d$, and $\Omega$.
By the triangle inequality, we have
\begin{equation}
\begin{aligned}
&\|U^n-U(t_n)\|_{L^2(\Omega)}\\
& \le 
\|\mathcal S(\tau) U^{n-1}-\mathcal S(\tau) U(t_{n-1})\|_{L^2(\Omega)}
+ \|\mathcal S(\tau) U(t_{n-1})-\mathcal T(\tau) U(t_{n-1})\|_{L^2(\Omega)}.
\end{aligned}
\end{equation}
By using \eqref{Eu_2.32}, \eqref{Eu_2.33}, and \eqref{J18_22a}, we have
\begin{equation}
\|\mathcal S(\tau) U(t_{n-1})-\mathcal T(\tau) U(t_{n-1})\|_{L^2(\Omega)}\le B_1\tau^3,
\end{equation} 
where $B_1>0$ is independent of $\tau$. 
According to Lemma \ref{lem:dependV1V2},
\begin{equation}
\begin{aligned}
&\|\mathcal S(\tau) U^{n-1}-\mathcal S(\tau) U(t_{n-1})\|_{L^2(\Omega)} \\
&\le \|\mathcal S_{\mathcal N}(\tau) \mathcal S_{\mathcal L}(\tau/2) U^{n-1}-\mathcal S_{\mathcal N}(\tau) \mathcal S_{\mathcal L}(\tau/2) U(t_{n-1})\|_{L^2(\Omega)}\\
& \le e^{(1+3m)\tau} \| \mathcal S_{\mathcal L}(\tau/2) U^{n-1}- \mathcal S_{\mathcal L}(\tau/2)U(t_{n-1})\|_{L^2(\Omega)}\\
&\le  e^{(1+3m)\tau} \|U^{n-1}-U(t_{n-1})\|_{L^2(\Omega)}.
\end{aligned}
\end{equation}
It follows that 
\begin{equation}
\|U^n-U(t_n)\|_{L^2(\Omega)} \le e^{(1+3m)\tau} \|U^{n-1}-U(t_{n-1})\|_{L^2(\Omega)} + B_1\tau^3. 
\end{equation}
As a consequence, we have
\begin{equation}
\begin{aligned}
\|U^n-U(n\tau)\|_{L^2(\Omega)} \le & e^{(1+3m)\tau} \|U^{n-1}-U(t_{n-1})\|_{L^2(\Omega)} + B_1\tau^3\\
\le & e^{(1+3m)2\tau} \|U^{n-2}-U((n-2)\tau)\|_{L^2(\Omega)} + B_1\tau^3 (1+e^{(1+3m)\tau})\\
\vdots & \\
\le & e^{(1+3m)n\tau} \|U^{0}-U(0)\|_{L^2(\Omega)} + B_1\tau^3 \sum_{j=0}^{n-1} e^{(1+3m)j\tau}\\
\le & B_1 T e^{(1+3m)T} \tau^2.
\end{aligned}
\end{equation}
The proof is completed.

\end{proof}

\section{Connections to diffusion generated methods}\label{sec:connections}
In the study of the unconditional energy dissipation, we constructed a modified energy $\widetilde E({U})$ as defined in \eqref{eq:modifiedenergy}. In this section, we start with the modified energy and derive the same method through the optimization point of view. We then make a strong connection with the diffusion generated methods when taking the limit as $\varepsilon \rightarrow 0$.

\subsection{Another derivation of the scheme from the optimization of the modified energy $\widetilde E({U})$.}
In this section, we first consider to generate a minimizing sequence from the optimization approach for the modified energy $\widetilde E({U})$. 
Rewrite $\widetilde E({U}) = \widetilde E_1({U})-\widetilde E_2({U})$ where 
 \begin{align}
\widetilde E_1({U}) =\int_\Omega \frac{1}{2\tau}\left\langle UU^{\top}+\frac{2e^\tau}{e^{2\tau}-1},  I \right\rangle_F \, dx 
\end{align}
and 
\begin{align}
\widetilde E_2({U}) =\int_\Omega \frac{1}{\tau}\left\langle \frac{e^\tau}{e^{2\tau}-1}\left(I+(e^{2\tau}-1)(e^{\frac{\tau \varepsilon^2\Delta}{2}}U)(e^{\frac{\tau \varepsilon^2\Delta}{2}}U)^{\top}\right)^{\frac{1}{2}},  I \right\rangle_F \, dx .
\end{align}

The Fr\'echet derivative of $\widetilde E_2({U})$ can be derived as follows.

\begin{lem}
The Fr\'echet derivative of $\widetilde E_2({U})$ with respect to $U$ is 
\[\frac{\delta \widetilde E_2({U})}{\delta U} = \dfrac{e^\tau}{\tau}e^{\frac{\tau \varepsilon^2\Delta}{2}}\left[\left(I+(e^{2\tau}-1)(e^{\frac{\tau \varepsilon^2\Delta}{2}}U)(e^{\frac{\tau \varepsilon^2\Delta}{2}}U)^{\top}\right)^{-\frac{1}{2}}e^{\frac{\tau \varepsilon^2\Delta}{2}}U\right].\]

\end{lem}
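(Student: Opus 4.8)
The plan is to reduce the computation to two standard ingredients: the Fr\'echet derivative of the trace of a matrix square root, and the self-adjointness of the heat propagator. First I would strip off the scalar constants and introduce the shorthand $W = e^{\frac{\tau \varepsilon^2\Delta}{2}}U$, $c = e^{2\tau}-1$, and $M = M(W) = I + c\,WW^\top$, so that $\widetilde E_2(U) = \frac{e^\tau}{\tau c}\int_\Omega \mathrm{Tr}[M^{\frac12}]\,dx$. Since $M \succeq I$ is symmetric positive definite at every $x$, the map $W \mapsto \mathrm{Tr}[M(W)^{\frac12}]$ is smooth, and the chain rule through the linear map $U \mapsto W$ will be what produces the claimed outer factor $e^{\frac{\tau \varepsilon^2\Delta}{2}}$.

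The key step is the pointwise (in $x$) differential of $\mathrm{Tr}[M^{\frac12}]$. I would use the fact that for a symmetric positive-definite $M$ and a symmetric perturbation $\delta M$ one has $\delta\,\mathrm{Tr}[f(M)] = \mathrm{Tr}[f'(M)\,\delta M]$; applied with $f(s) = s^{\frac12}$, hence $f'(M) = \frac12 M^{-\frac12}$, this gives $\delta\,\mathrm{Tr}[M^{\frac12}] = \frac12\,\mathrm{Tr}[M^{-\frac12}\delta M]$. Substituting $\delta M = c\,(\delta W\,W^\top + W\,\delta W^\top)$, which is indeed symmetric, and using the cyclic invariance of the trace together with the symmetry of $M^{-\frac12}$, the two resulting terms coincide and collect into $c\,\langle M^{-\frac12}W,\,\delta W\rangle_F$. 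Thus the Frobenius gradient of the integrand with respect to $W$ is $c\,M^{-\frac12}W$.

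It then remains to transport this back to a derivative in $U$. Writing $\delta W = e^{\frac{\tau \varepsilon^2\Delta}{2}}\delta U$, the variation of the functional becomes $\frac{e^\tau}{\tau c}\int_\Omega \langle c\,M^{-\frac12}W,\,e^{\frac{\tau \varepsilon^2\Delta}{2}}\delta U\rangle_F\,dx$. Because the periodic heat semigroup $e^{\frac{\tau \varepsilon^2\Delta}{2}}$ is a self-adjoint Fourier multiplier (real symbol $e^{-\frac{\tau \varepsilon^2|k|^2}{2}}$) acting entrywise, it is self-adjoint with respect to the pairing $\int_\Omega\langle\cdot,\cdot\rangle_F\,dx$, so I can move it onto the first slot. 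After cancelling $c$ against $\frac1c$, the coefficient simplifies to $\frac{e^\tau}{\tau}$ and the operator $e^{\frac{\tau \varepsilon^2\Delta}{2}}$ surfaces outside, yielding exactly the stated formula once $M$ and $W$ are written out.

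The main obstacle I anticipate is the rigorous justification of $\delta\,\mathrm{Tr}[M^{\frac12}] = \frac12\,\mathrm{Tr}[M^{-\frac12}\delta M]$, since the matrix square root is not an entrywise function of $M$. I would secure this either by diagonalizing $M = O\Lambda O^\top$ and differentiating the eigenvalues (the first-order off-diagonal contributions drop out under the trace), or by differentiating the integral representation $M^{\frac12} = \frac1\pi\int_0^\infty M(M+sI)^{-1}s^{-\frac12}\,ds$ under the integral sign. Everything else, namely the trace manipulations and the self-adjointness of the semigroup, is routine.
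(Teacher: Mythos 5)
Your proof is correct, and its skeleton is the same as the paper's: compute the first variation of $\widetilde E_2$, differentiate the trace of the matrix square root, and then move the heat semigroup onto the other slot of the $\int_\Omega\langle\cdot,\cdot\rangle_F\,dx$ pairing by self-adjointness. The difference is in how the central step is implemented, and your choice is worth noting. The paper first takes an SVD $e^{\frac{\tau\varepsilon^2\Delta}{2}}U = U_0\Sigma V_0^\top$, conjugates inside the trace so that the unperturbed matrix becomes the diagonal matrix $I+(e^{2\tau}-1)\Sigma^2$, and then quotes the limit
\[
\lim_{\gamma\to 0}\frac{1}{\gamma}\left[(X_1+\gamma X_2)^{\frac12}-X_1^{\frac12}\right] = \frac12 X_1^{-\frac12}X_2,
\]
whereas you invoke the spectral-calculus identity $\delta\,\mathrm{Tr}[f(M)] = \mathrm{Tr}[f'(M)\,\delta M]$ for symmetric $M$ and symmetric $\delta M$. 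These are not the same statement: the quoted matrix limit is actually false for general non-commuting $X_1,X_2$ (the true derivative $L$ solves the Sylvester equation $X_1^{1/2}L+LX_1^{1/2}=X_2$, and agrees with $\frac12X_1^{-1/2}X_2$ only on the diagonal in the eigenbasis of $X_1$, hence under the trace). The paper's conclusion is nonetheless correct because its expression sits inside $\langle\cdot,I\rangle_F$ with $X_1$ diagonal, but the fact as stated is imprecise; your trace-level identity, justified by eigenvalue perturbation or by differentiating the integral representation $M^{1/2}=\frac1\pi\int_0^\infty M(M+sI)^{-1}s^{-1/2}\,ds$, is exactly the statement that is needed and is rigorous as written. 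What the paper's SVD reduction buys in exchange is a purely diagonal, scalar-like computation with no appeal to spectral functions, at the cost of the conjugation bookkeeping with $U_0,V_0$; your version is coordinate-free, needs no SVD, and generalizes verbatim to any smooth $f$ in place of the square root.
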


\begin{proof}

Assume the singular value decomposition  $e^{\frac12\tau\varepsilon^2\Delta} U = U_0 \Sigma V_0^{\top}$. Let $\varphi\in \mathbb R^{m\times m}$ be any given matrix-valued function. Then, we can calculate
\begin{equation}
    \begin{aligned}
    &\lim_{\gamma \rightarrow 0}\frac{1}{\gamma}\left[\widetilde E_2(U+\gamma\varphi) - \widetilde E_2(U)\right]\\
    = & \lim_{\gamma \rightarrow 0} \Bigg\{\frac{e^\tau}{\gamma \tau(e^{2\tau}-1)}\int_\Omega \left\langle \left[I+(e^{2\tau}-1)\left(e^{\frac{\tau \varepsilon^2\Delta}{2}}(U+\gamma\varphi)\right)\left(e^{\frac{\tau \varepsilon^2\Delta}{2}}(U+\gamma\varphi) \right)^{\top}\right]^{\frac{1}{2}},  I \right\rangle_F \, dx\\
    &-\frac{e^\tau}{\gamma\tau(e^{2\tau}-1)}\int_\Omega \left\langle \left[I+(e^{2\tau}-1)\left(e^{\frac{\tau \varepsilon^2\Delta}{2}}U\right)\left(e^{\frac{\tau \varepsilon^2\Delta}{2}}U \right)^{\top}\right]^{\frac{1}{2}},  I \right\rangle_F\, dx \Bigg\}\\
       = &\lim_{\gamma \rightarrow 0}\Bigg\{ \frac{e^\tau}{\gamma\tau(e^{2\tau}-1)}\int_\Omega \left\langle \left[I+(e^{2\tau}-1)\left(\Sigma^2+\gamma(\tilde\varphi\Sigma + \Sigma \tilde\varphi^{\top}) + \gamma^2 \tilde\varphi\tilde\varphi^{\top}\right)\right]^{\frac{1}{2}},  I \right\rangle_F\, dx\\
    &-\frac{e^\tau}{\gamma\tau(e^{2\tau}-1)}\int_\Omega \left\langle \left[I+(e^{2\tau}-1)\Sigma^2\right]^{\frac{1}{2}},  I \right\rangle_F\, dx \Bigg\}\\
    =&\frac{e^\tau}{\tau(e^{2\tau}-1)} \int_\Omega \left\langle \frac12 \left[I+(e^{2\tau}-1)\Sigma^2\right]^{-\frac{1}{2}}(e^{2\tau}-1)(\tilde\varphi\Sigma + \Sigma \tilde\varphi^{\top}),  I \right\rangle_F\, dx\\
    =&\frac{e^\tau}{\tau} \int_\Omega \left\langle \left[I+(e^{2\tau}-1)\Sigma^2\right]^{-\frac{1}{2}} \Sigma,  \tilde\varphi \right\rangle_F\, dx\\
    =&\frac{e^\tau}{\tau} \int_\Omega \left\langle e^{\frac{\tau \varepsilon^2\Delta}{2}}\left[\left(I+(e^{2\tau}-1)(e^{\frac{\tau \varepsilon^2\Delta}{2}}U)(e^{\frac{\tau \varepsilon^2\Delta}{2}}U)^{\top}\right)^{-\frac{1}{2}}e^{\frac{\tau \varepsilon^2\Delta}{2}}U\right], \varphi \right\rangle_F\, dx,
    \end{aligned}
\end{equation}
where $\tilde \varphi = U_0^{\top} (e^{\frac12\tau\varepsilon^2\Delta} \varphi) V_0$ and we use the fact $$\lim_{\gamma\rightarrow 0} \left[(X_1+\gamma X_2)^{\frac12} -X_1^{\frac12} \right]= \frac12 X_1^{-\frac12} X_2.$$
The proof is completed.

\end{proof}

The convexity of $\widetilde E_2$ can be further shown in the following lemma.

\begin{lem}[Convexity of $\widetilde E_2$]
Let \( A \in \mathbb{R}^{m \times n} \). The function 
\[
h(A) = \mathrm{Tr}\left( \left( I + AA^\top \right)^{1/2} \right)
\]
is convex in \( A \). Consequently, $\widetilde E_2 (U)$ is convex w.r.t. $U$. 
\end{lem}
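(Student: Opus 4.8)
The plan is to recognize $h(A)$ as a nuclear norm and then combine convexity of norms with the affine structure of the augmented matrix $[I~A]$. The identity recorded at the end of the proof of Lemma~\ref{lem:ineqTr} gives
\[
h(A) = \mathrm{Tr}\left[\left(I + AA^\top\right)^{\frac12}\right] = \| [I~A] \|_*,
\]
since $[I~A][I~A]^\top = I + AA^\top$ forces the singular values of the augmented matrix $[I~A]$ to be $(1+\sigma_i(A)^2)^{\frac12}$, whose sum is exactly $\mathrm{Tr}[(I+AA^\top)^{\frac12}]$. Thus $h$ is the composition of the nuclear norm with the map $A \mapsto [I~A]$.

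The key observation is that this map interacts well with convex combinations: for $\theta\in[0,1]$ and $A,B\in\mathbb R^{m\times n}$,
\[
[I~\theta A + (1-\theta)B] = \theta\, [I~A] + (1-\theta)\, [I~B],
\]
because the constant identity block satisfies $\theta I + (1-\theta)I = I$. Invoking the triangle inequality and positive homogeneity of the nuclear norm (valid since $\|\cdot\|_*$ is a genuine norm), I then obtain
\[
h(\theta A + (1-\theta)B) = \| \theta\, [I~A] + (1-\theta)\, [I~B] \|_* \le \theta\, h(A) + (1-\theta)\, h(B),
\]
which is precisely the convexity of $h$.

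For the consequence on $\widetilde E_2$, I would write its integrand, up to the positive constant $\frac{e^\tau}{\tau(e^{2\tau}-1)}$, as $h(A(U))$ with $A(U) \coloneqq (e^{2\tau}-1)^{\frac12}\, e^{\frac{\tau \varepsilon^2\Delta}{2}}U$. Since the heat semigroup $e^{\frac{\tau \varepsilon^2\Delta}{2}}$ and scalar multiplication are linear, $U\mapsto A(U)$ is linear, hence affine; therefore, for each fixed $x$, the map $U \mapsto h(A(U)(x))$ is convex by the previous paragraph. Convexity is preserved under integration over $\Omega$ against the (positive) Lebesgue measure and under multiplication by the positive constant, which yields the convexity of $\widetilde E_2$ in $U$.

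This argument has essentially no hard obstacle once the nuclear-norm identity is available; the only point requiring a little care is not to treat $h$ as acting pointwise in $x$ \emph{before} the semigroup, since $e^{\frac{\tau \varepsilon^2\Delta}{2}}$ is nonlocal --- but this is harmless, as $U\mapsto A(U)(x)$ is still linear as a functional of the whole field $U$. Should one prefer a self-contained proof of the convexity of $h$ that does not cite convexity of the nuclear norm, I would instead use the dual characterization $\| M \|_* = \max\{\langle Y,M\rangle_F : \sigma_1(Y)\le 1\}$: taking $M = [I~A]$ and splitting $Y = [Y_1~Y_2]$ expresses $h(A)$ as a supremum over $Y$ of the affine functions $\mathrm{Tr}(Y_1) + \langle Y_2, A\rangle_F$, and a pointwise supremum of affine functions is automatically convex.
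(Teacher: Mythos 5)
Your proof is correct and takes essentially the same approach as the paper: both identify $h(A)$ with the nuclear norm of the augmented matrix $[I~A]$ (the paper writes $B=\begin{bmatrix} I & A\end{bmatrix}^\top$, which is equivalent since the nuclear norm is transpose-invariant), observe that this matrix depends affinely on $A$, and conclude convexity from convexity of the nuclear norm. Your extra details---the explicit triangle-inequality step, the passage to $\widetilde E_2$ via linearity of $e^{\frac{\tau\varepsilon^2\Delta}{2}}$ and integration over $\Omega$, and the optional dual-characterization argument---merely flesh out what the paper leaves implicit.
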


\begin{proof}
Define the augmented matrix \( B = \begin{bmatrix} I & A \end{bmatrix}^\top \), where \( I \) is the identity matrix. Note that
\[
B^\top B = I + AA^\top,
\]
and the nuclear norm (sum of singular values) of \( B \) is
\[
\| B \|_* = \mathrm{Tr}\left( \left( B^\top B \right)^{1/2} \right) = \mathrm{Tr}\left( \left( I + AA^\top \right)^{1/2} \right) = h(A).
\]
The nuclear norm \( \| \cdot \|_* \) is a well-known convex function over matrices. Since \( B \) depends affinely on \( A \), the composition \( h(A) = \| B \|_* \) preserves convexity. Therefore, \( h(A) \) is convex in \( A \).
\end{proof}

Now, based on above properties on the decomposition of $\widetilde E({U})$, we consider to generate the following sequence based on an initial $U^0$,
\[U^1, U^2, \cdots, U^n, U^{n+1}, \cdots\]
such that
\begin{align}
U^{n+1} = \arg\min\limits_{U\in \mathbb R^{m\times m}} \widetilde E_1({U})-\mathcal{L}_2(U,U^n)\label{min:linearproblem}
\end{align}
where
\begin{align*}
\mathcal{L}_2(U,U^n) =E_2(U^n)+ \int_\Omega\left\langle \left.\frac{\delta \widetilde E_2({U})}{\delta U}\right|_{U=U^n}, U-U^n\right\rangle \, dx
\end{align*}
is the linearization of $\widetilde E_2({U})$ at $U^n$.

Because $\mathcal{L}_2(U,U^n)$ is linear in $U$ and $\widetilde E_1({U})$ is convex with respect to $U$, straightforward calculation of the Fr\'echet derivative yields the solution of \eqref{min:linearproblem}
\[ U^{n+1} = e^\tau e^{\frac{\tau \varepsilon^2\Delta}{2}}\left[\left(I+(e^{2\tau}-1)(e^{\frac{\tau \varepsilon^2\Delta}{2}}U^n)(e^{\frac{\tau \varepsilon^2\Delta}{2}}U^n)^{\top}\right)^{-\frac{1}{2}}e^{\frac{\tau \varepsilon^2\Delta}{2}}U^n\right]\]
which is exactly the Strang operator splitting scheme as discussed in \eqref{eq:Strang}.

We then give the proof of Theorem~\ref{thm:energy} from the optimization point of view.

\begin{proof}[Proof of Theorem~\ref{thm:energy}]
Denote $$\widetilde E^L_1({U})= \widetilde E_1({U})-\mathcal{L}_2(U,U^n)$$ and one can have
\begin{align}
\widetilde E^L_1({U^n}) = \widetilde E({U^n}),\\
\widetilde E^L_1({U^{n+1}}) \leq \widetilde E({U^n}),\label{eq:E1L}
\end{align}
where \eqref{eq:E1L} is a direct consequence that $U^{n+1}$ is a minimizer of a convex functional.
To prove the energy dissipation, one only needs to show that 
\[\widetilde E({U^{n+1}}) \leq \widetilde E^L_1({U^{n+1}})\]
which is equivalent to
\begin{align}
-\widetilde E_2({U^{n+1}}) \leq -\mathcal{L}_2(U^{n+1},U^n).\label{eq:linearless}
\end{align}
Considering the fact that $-\widetilde E_2({U^{n+1}})$ is concave and the graph of a concave functional always locates below its linearization, one can directly obtain \eqref{eq:linearless}.
\end{proof}

\subsection{The case when $\varepsilon\rightarrow 0$.}

In this section, we study the case of the Strang operator splitting scheme as $\varepsilon \rightarrow 0$. To be consistent with the framework of diffusion generated methods introduced in \cite{osting2020diffusion}, we consider the re-scaled equation with $\tilde t = \varepsilon^2 t $ to \eqref{eq:matrixAC} and thus have 
\begin{equation} \label{eq:resacledmatrixAC}
\partial_{\tilde t} U = \Delta U  + \frac{1}{\varepsilon^2}(U - UU^\top  U).
\end{equation}
In this time scale, $\mathcal S_{\mathcal N}\left( \tilde t \right)$ can be written into
\begin{align} \label{3t1}
\mathcal S_{\mathcal N}(\tilde t) U^0 := \left((e^{2\tilde t/\varepsilon^2}-1) U^0(U^0)^\top  + \mathrm I\right)^{-\frac12} e^{\tilde t/\varepsilon^2} U^0,\qquad \tilde t>0.
\end{align}

The following lemma shows that the operator $ \mathcal S_{\mathcal N}\left( \tau \right)$ converges to a projection operator to the orthogonal matrix group as $\varepsilon\rightarrow 0$.

\begin{lem}
Assume $U^0$ is nonsigular, as $\varepsilon\rightarrow 0$, $ \mathcal S_{\mathcal N}\left( \tilde t \right)$ defined in \eqref{3t1} converges to $ \mathcal P \left( \tilde t \right)$ with \begin{equation}\label{def:P}
    \mathcal P \left( \tilde t  \right) U^0 := P Q^\top,
\end{equation}
where $P$ and $Q$ are obtained from the singular value decomposition $U^0  = P \Sigma Q^\top$.
\end{lem}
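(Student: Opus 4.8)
The plan is to reduce the matrix limit to a family of scalar limits via the singular value decomposition of $U^0$. First I would set $s = \tilde t/\varepsilon^2$ and observe that, for a fixed $\tilde t > 0$, the limit $\varepsilon \to 0$ is equivalent to $s \to +\infty$. Writing $U^0 = P\Sigma Q^\top$ with $P, Q$ orthogonal and $\Sigma = \diag\{\sigma_1, \ldots, \sigma_m\}$, I would use $Q^\top Q = I$ to get $U^0 (U^0)^\top = P\Sigma^2 P^\top$, so that by functional calculus the inverse square root diagonalizes in the same basis:
$$\left((e^{2s}-1) U^0 (U^0)^\top + I\right)^{-\frac12} = P\left[(e^{2s}-1)\Sigma^2 + I\right]^{-\frac12} P^\top.$$

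Next I would substitute this into the definition \eqref{3t1} and use $P^\top P = I$ to obtain the explicitly diagonal form
$$\mathcal{S}_{\mathcal{N}}(\tilde t) U^0 = P\, D(s)\, Q^\top, \qquad D(s) = e^s \left[(e^{2s}-1)\Sigma^2 + I\right]^{-\frac12} \Sigma,$$
where $D(s)$ is diagonal with entries $d_i(s) = e^s \sigma_i / \left((e^{2s}-1)\sigma_i^2 + 1\right)^{\frac12}$. The problem thus reduces to computing $\lim_{s\to\infty} d_i(s)$ for each $i$.

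For the scalar limit I would divide numerator and denominator by $e^s$ to rewrite $d_i(s) = \sigma_i / \left(\sigma_i^2 + e^{-2s}(1 - \sigma_i^2)\right)^{\frac12}$. Here the nonsingularity hypothesis enters crucially: since $U^0$ is nonsingular, every $\sigma_i > 0$, so letting $s \to \infty$ gives $e^{-2s} \to 0$ and hence $d_i(s) \to \sigma_i/(\sigma_i^2)^{\frac12} = 1$. Therefore $D(s) \to I$ and $\mathcal{S}_{\mathcal{N}}(\tilde t) U^0 \to P\, I\, Q^\top = PQ^\top = \mathcal{P}(\tilde t) U^0$.

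I do not anticipate a serious obstacle. The only subtle point is that nonsingularity is precisely what forces each limit to equal $1$ rather than $0$: a vanishing singular value $\sigma_i = 0$ would give $d_i \equiv 0$, so the hypothesis cannot be dropped. Since the matrices have fixed finite size $m \times m$, entrywise convergence of $D(s)$ to $I$ immediately yields convergence in any matrix norm, so no additional care about the mode of convergence is required.
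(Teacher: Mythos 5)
Your proposal is correct and follows essentially the same route as the paper's proof: both diagonalize $\mathcal S_{\mathcal N}(\tilde t)U^0$ via the SVD of $U^0$, reduce the matrix limit to the scalar limits of the diagonal entries $e^s\sigma_i\left((e^{2s}-1)\sigma_i^2+1\right)^{-\frac12}\to 1$, and invoke nonsingularity to ensure every $\sigma_i>0$. Your write-up merely makes explicit a few steps the paper leaves implicit (the substitution $s=\tilde t/\varepsilon^2$, the functional-calculus diagonalization of the inverse square root, and the remark on why a vanishing singular value would break the limit), so no substantive difference remains.
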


\begin{proof}
Using $U^0 = P \Sigma Q^\top$, $\mathcal S_{\mathcal N}(\tilde t) U^0$ can be written into 
\[\mathcal S_{\mathcal N}(\tilde t) U^0 = P (I+(e^{2\tilde t/\varepsilon^2}-1)\Sigma^2)^{-\frac12} e^{\tilde t/\varepsilon^2} \Sigma Q^\top .\]
Because $\Sigma$ is a diagonal matrix will all positive entries $\sigma_{ii}$, one can have the limit of each diagonal entry in $(I+(e^{2\tilde t/\varepsilon^2}-1)\Sigma^2)^{-\frac12} e^{\tilde t/\varepsilon^2} \Sigma$:
\[\lim_{\varepsilon \rightarrow 0} \dfrac{e^{\tilde t/\varepsilon^2} \sigma_{ii}}{(1+(e^{2\tilde t/\varepsilon^2}-1)\sigma_{ii}^2)^{\frac12}} = 1,\]
implying the convergence of $ \mathcal S_{\mathcal N}\left( \tilde t \right)$ to $ \mathcal P \left( \tilde t \right)$.
\end{proof}

Then, the scheme with $\varepsilon\rightarrow 0$ can be further written as the diffusion generated scheme introduced in \cite{osting2020diffusion} in the framework of Strang operator splitting.

\begin{rem}
The Strang operator splitting scheme \eqref{eq:Strang} is second-order accurate in time when $\varepsilon$ is fixed. In the limit as $\varepsilon \rightarrow 0$, the exact solution converges to the mean curvature flow. The diffusion-generated algorithms can be formulated in a manner analogous to the Strang operator splitting scheme, even for the scalar case \cite{xu2017efficient}. However, numerical observations using spectrally accurate time-stepping algorithms indicate that the temporal accuracy of the diffusion-generated method for approximating the mean curvature flow remains first order \cite{jiang2018efficient}. This implies that the Strang operator splitting scheme may not be asymptotically compatible \cite{tian2014asymptotically}.
\end{rem}

\section{Numerical tests}\label{sec:num}
In this section, we illustrate some numerical results on the energy stability, determinant bound preservation and interfacial dynamics of Strang splitting method for MAC equation.

\subsection{Comparison between Strang splitting and thresholding methods}

We first present a comparative study of two numerical approaches for solving the rescaled matrix-valued Allen--Cahn equation \eqref{eq:resacledmatrixAC}: the Strang operator splitting method and the thresholding method \cite{osting2020diffusion}:
\begin{equation}
     U^{n+1}  = \mathcal S_{\mathcal L}\left( \tau/2\right)  \mathcal{P}\left( \tau \right)\mathcal S_{\mathcal L}\left( \tau/2\right) U^n
\end{equation}
where the projection $\mathcal{P}$ is defined in \eqref{def:P}.

The spatial domain $[-L/2, L/2]^2$ with $L=2\pi$ is discretized using $N=256$ uniform grid points in each direction, yielding spatial resolution $h = 2\pi/N$. Temporal integration employs a fixed time step $\tau =0.01$ up to final time $t_{\max}=2$. The Fourier spectral method is employed to solve the linear evolution $e^{\tau \Delta}$ with wavevectors $k_x, k_y \in [-N/2, N/2-1]$.

The initial matrix field $U^0 = v(x,y,0) \in \mathbb{R}^{2\times 2}$ combines angular modulation and radial confinement:
\begin{equation}
\begin{aligned}
    v_{11} &= \cos\alpha, \quad v_{12} = -\chi \sin\alpha  +(1-\chi) \sin\alpha, \\
v_{21} &= \sin\alpha, \quad v_{22} = \chi\cos\alpha   - (1-\chi) \cos\alpha,
\end{aligned}
\end{equation}
where
\begin{align*}
\alpha(x,y) &= \frac{\pi}{2}\sin(x+y),\quad
\chi(r,\theta)= \begin{cases} 
1 & r < 2\pi(0.3 + 0.06\sin6\theta) \\
0 & \text{otherwise}
\end{cases} 
\end{align*}
with $r(x,y) = \sqrt{x^2 + y^2}$ and $
\theta(x,y) = \arctan(x/y)$.

Due to the rescaling technique, the energy for the solution to \eqref{eq:resacledmatrixAC} is also rescaled as 
$$E(t) =\frac{1}{2}\int_\Omega \left\|\nabla U \right\|_F^2\ dx + \frac{1}{4\varepsilon^2}\int_\Omega \|I - UU^\top\|_F^2\ dx .$$
Moreover, the difference between the numerical solutions, $U^n_{\text{Strang}}$ and $U^n_{\text{Thres}}$, obtained from Strang splitting and thresholding methods respectively is characterized by
\begin{equation}\label{eq:dif}
\int_\Omega \|U^n_{\text{Strang}} - U^n_{\text{Thres}}\|_F^2\ dx.
\end{equation}

\begin{figure}[!h]
    \centering
\includegraphics[width=1\linewidth]{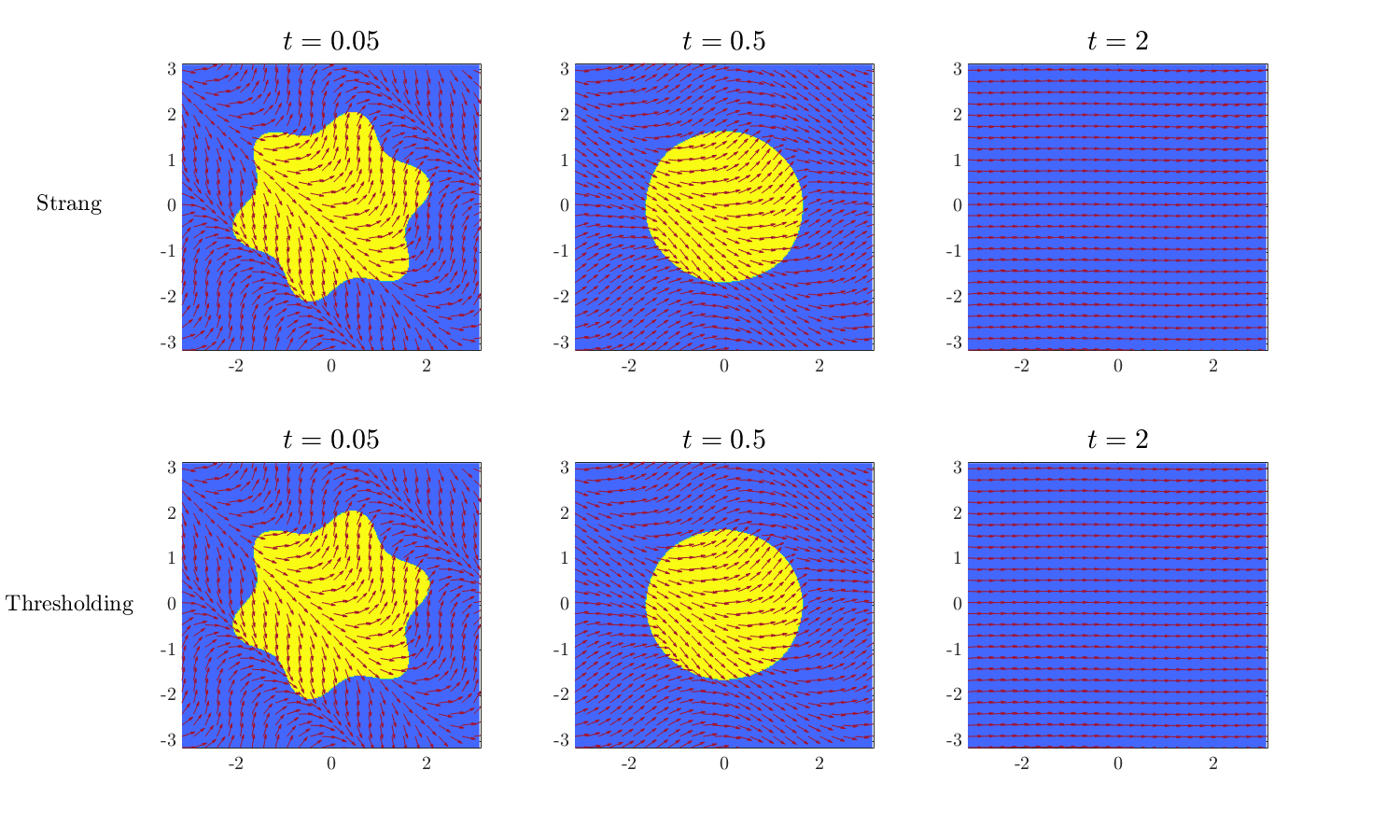}
    \caption{Interfacial snapshots at different time, where the region of $\det(U)>0$ is colored in yellow and the region of $\det(U)<0$ is colored in blue. }
    \label{fig:ex2sol}
\end{figure}

\begin{figure}[!h]
    \centering
\includegraphics[width=0.48\linewidth]{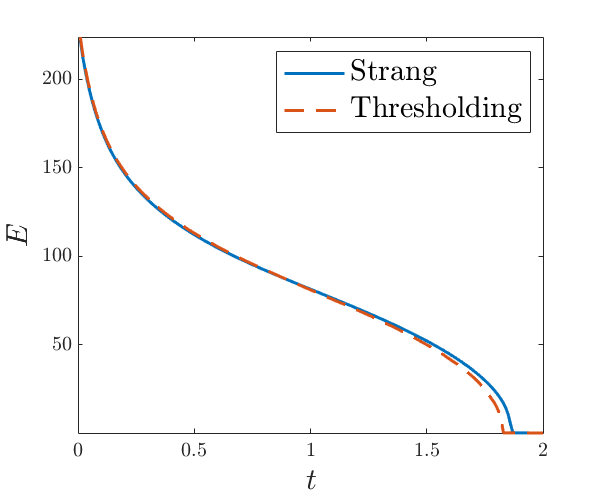}
\includegraphics[width=0.48\linewidth]{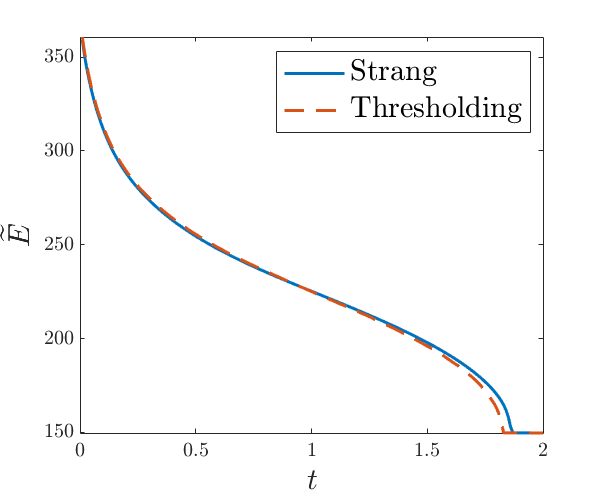}
    \caption{Evolution of original energy $E$ (left) and modified energy $\tilde E$ (right) for Strang splitting method and thresholding method.}
    \label{fig:ex2energy}
\end{figure}

\begin{figure}[!h]
    \centering
\includegraphics[width=0.48\linewidth]{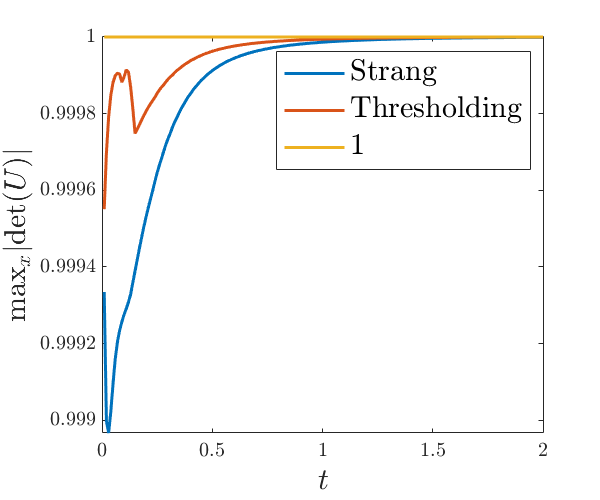}
\includegraphics[width=0.48\linewidth]{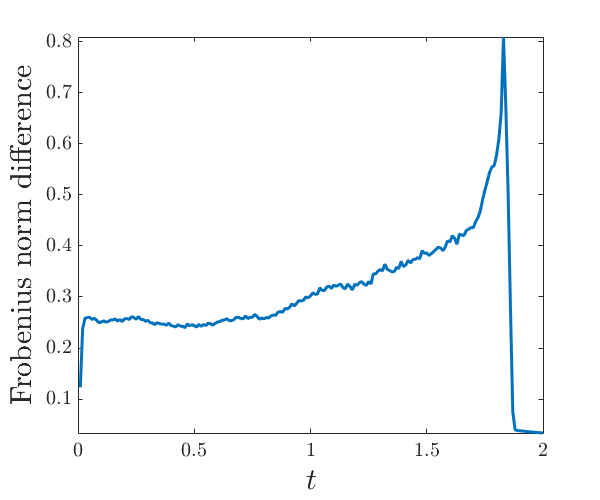}
    \caption{Evolution of determinant bound $\max _x|\operatorname{det}(U)|$ for the Strang splitting method and thresholding method (left); evolution of the Frobenius-norm difference \eqref{eq:dif} between $U^n_{\text{Strang}}$ and $U^n_{\text{Thres}}$.}
    \label{fig:ex2det_diff}
\end{figure}

From Figure~\ref{fig:ex2sol}, both methods generate similar evolution patterns. Moreover, the energy decay rates in Figure~\ref{fig:ex2energy} nearly coincide, suggesting comparable dissipation properties.
As observed in Figure \ref{fig:ex2det_diff}, the Strang method and the thresholding method both maintain maximum determinant bound $1$, and the Frobenius norm difference between these two methods can reach up to $0.8$ by $t=2$.
In summary, the Strang splitting demonstrates nice structure preservation at comparable computational cost, making it preferable for long-time simulations. Meanwhile, the thresholding method offers an alternative implementation.

\subsection{2D MAC with random initial data}
The dynamical evolution of the 2-dimensional MAC \eqref{eq:matrixAC} is simulated using a Fourier pseudo spectral method combined with an operator-splitting time integration scheme. The spatial domain is discretized over a $256 \times 256$ grid, while the temporal evolution employs a step size of $\tau =0.1$ up to $t_{\max }=20$. Each component of the initial condition $U^0$ is taken to be uniformly random in $[0,0.1]$.

In Figure \ref{fig:ex1sol}, the visualization of interfacial evolution reveals the emergence of topological defects and their subsequent coarsening dynamics, driven by competition between gradient energy minimization and nonlinear interactions. The quiver plots superimposed on the phase illustrate the orientation of the first column of $U$. 

In Figure \ref{fig:ex1energy_det}, the energy functional $E(t)$, comprising interfacial and gradient contributions, exhibits monotonic decay, confirming the energy dissipation property for MAC. The modified energy $\widetilde{E}(t)$, derived from the splitting scheme, closely aligns with $E(t)$, validating numerical stability. Notably, the determinant constraint $\max _x|\operatorname{det}(U)|$ remains bounded below unity, ensuring the preservation of the matrix field's physical admissibility.
\begin{figure}[!h]
    \centering
\includegraphics[width=1\linewidth]{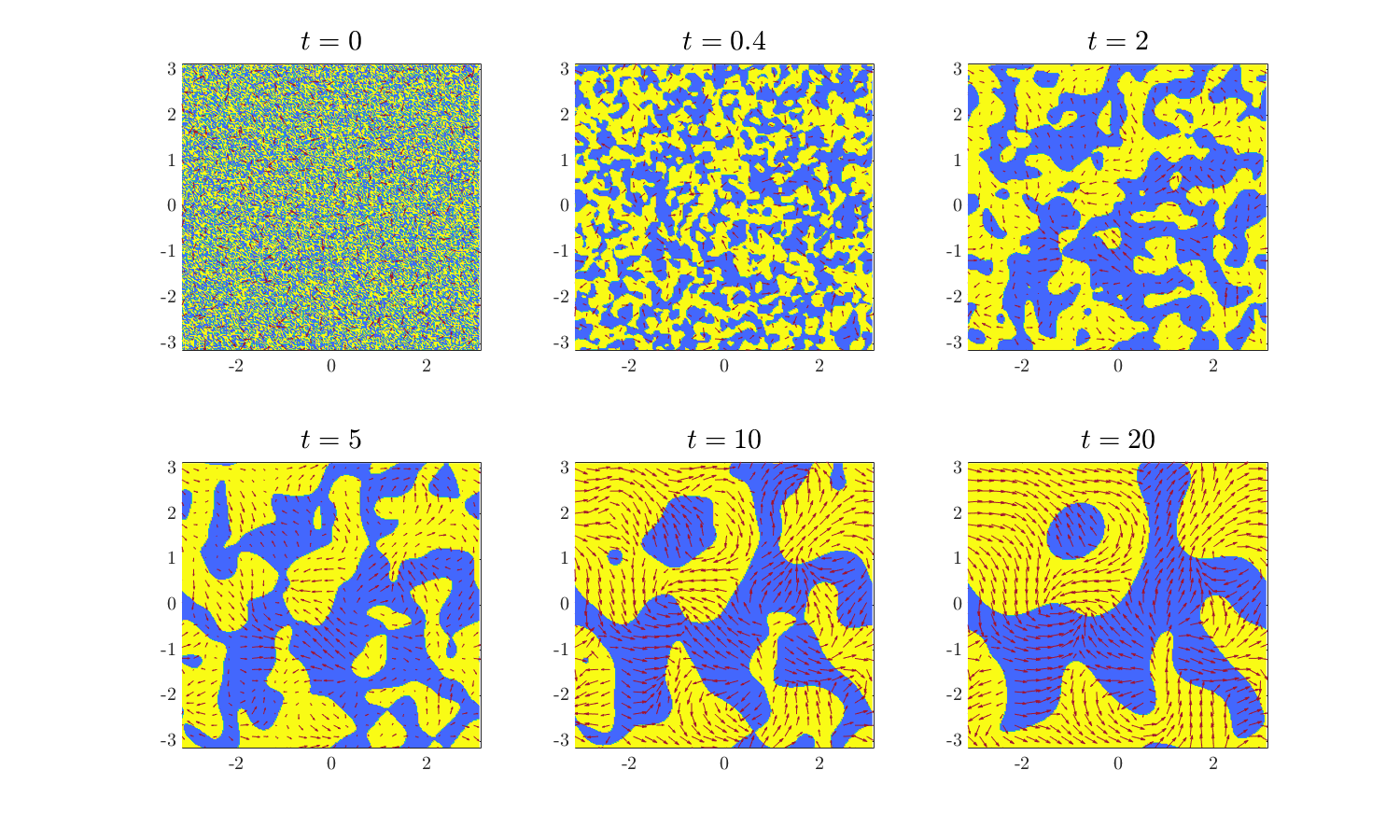}
    \caption{Interfacial snapshots at different time, where the region of $\det(U)>0$ is colored in yellow and the region of $\det(U)<0$ is colored in blue. }
    \label{fig:ex1sol}
\end{figure}

These results demonstrate the robustness of the Strang splitting method in capturing complex microstructure evolution for MAC equation under non-equilibrium conditions.

\begin{figure}[!h]
    \centering
\includegraphics[width=0.48\linewidth]{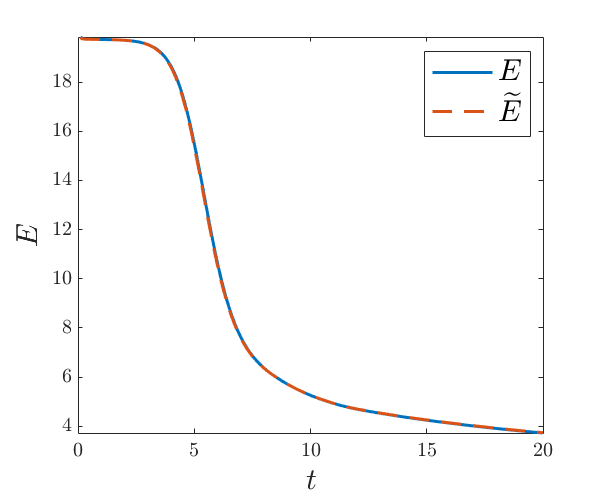}
\includegraphics[width=0.48\linewidth]{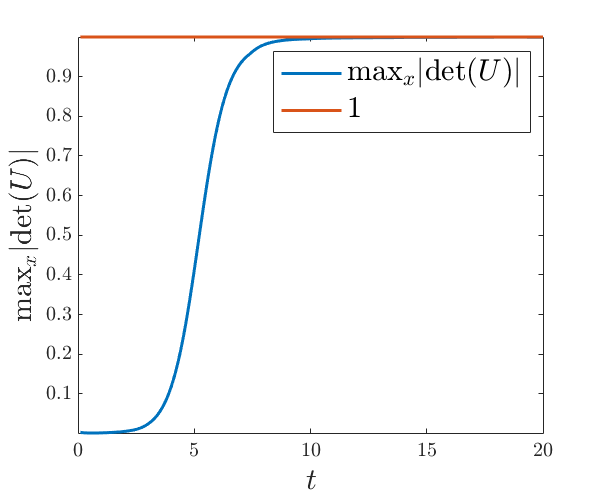}
    \caption{Evolution of original energy $E$ and modified energy $\tilde E$ (left); evolution of $\max _x|\operatorname{det}(U)|$ for the Strang splitting method (right).}
    \label{fig:ex1energy_det}
\end{figure}

\section{Conclusion and discussions}\label{sec:conclusion}
In this paper, we have presented a comprehensive analysis of the Strang splitting method for the matrix-valued Allen--Cahn equation. More precisely, we demonstrated in Theorem \ref{thm:energy} that the energy dissipation law holds for any time step size, removing the previous restriction on the time step in \cite{li2022stability2}. This was achieved through a novel trace inequality for matrix square roots.
We established the global-in-time $H^1$-stability of the numerical solution, ensuring that the solution remains bounded in the $H^1$ norm for all time.
We proved in Theorem \ref{thm:det} that the determinant of the matrix field is uniformly bounded by 1, which is crucial for maintaining the orthogonality of the matrix-valued solution.
In addition, we provided a rigorous second-order convergence analysis in time, showing that the numerical solution converges to the exact solution with second-order accuracy in Theorem \ref{thm:conv}.
Finally, we explored the connection between the Strang splitting method and diffusion-generated methods, particularly in the limit as $\varepsilon\rightarrow 0$. This provides a deeper understanding of the method's behavior in different regimes.

In future work, we plan to investigate the stability and convergence properties of higher order (than two) splitting methods under different boundary conditions and with different regularity assumptions for the matrix-valued Allen--Cahn equations.

\section*{Acknowledgment}
C. Quan was partially supported by the National Natural Science Foundation of China (Grant No. 12271241), Guangdong Provincial Key Laboratory of Mathematical Foundations for Artificial Intelligence (2023B1212010001), Guangdong Basic and Applied Basic Research Foundation (Grant No. 2023B1515020030), and Shenzhen Science and Technology Innovation Program (Grant No. JCYJ20230807092402004). T. Tang was partially supported by Science Challenge Project (Grant No. TZ2018001) and NSFC 11731006 and K20911001. D. Wang was partially supported by National Natural Science Foundation of China (Grant No. 12422116), Guangdong Basic and Applied Basic Research Foundation (Grant No. 2023A1515012199), Shenzhen Science and Technology Innovation Program (Grant No. JCYJ20220530143803007, RCYX20221008092843046), and Hetao Shenzhen-Hong Kong Science and Technology Innovation Cooperation Zone Project (No.HZQSWS-KCCYB-2024016).

\printbibliography

\end{document}

